\documentclass[pdflatex,sn-mathphys-num]{sn-jnl}

\usepackage{graphicx}%
\usepackage{multirow}%
\usepackage{amsmath,amssymb,amsfonts}%
\usepackage{amsthm}%
\usepackage{mathrsfs}%
\usepackage[title]{appendix}%
\usepackage{xcolor}%
\usepackage{textcomp}%
\usepackage{manyfoot}%
\usepackage{booktabs}%
\usepackage{algorithm}%
\usepackage{algorithmicx}%
\usepackage{algpseudocode}%
\usepackage{listings}%

\usepackage{url,kotex,comment}
\usepackage{tikz}
\usepackage{longtable}
\usepackage{supertabular}
\usepackage{array}

\newcommand{\Z}{\mathbb{Z}}

\newcommand{\calC}{\mathcal{C}}

\newcommand{\calT}{\mathcal{T}}

\newcommand{\qf}[1]{\langle #1 \rangle}
\newcommand{\Case}[1]{\noindent{\textbf{Case #1}}}
\newcommand{\Step}[1]{\noindent{\textbf{Step #1}}}
\newcommand{\binlattice}[4]{\begin{small}\begin{pmatrix} #1 & #2 \\ #3 & #4 \end{pmatrix}\end{small}}

\theoremstyle{thmstyleone}
\newtheorem{Thm}{Theorem}
\newtheorem{Prop}[Thm]{Proposition}
\newtheorem{Lem}[Thm]{Lemma}

\newtheorem*{THM}{Main Theorem}

\theoremstyle{thmstyletwo}

\newtheorem{Rmk}{Remark}

\theoremstyle{thmstylethree}

\begin{document}

\title[Tight universality of $m$-gonal forms with minimal criterion sets]{Tight universality of $m$-gonal forms with \\ minimal criterion sets}

\author[1]{\fnm{Byeong Moon} \sur{Kim}}\email{kbmgwnu@kangwon.ac.kr}

\author*[2]{\fnm{Ji Young} \sur{Kim}}\email{jykim98@snu.ac.kr}

\affil[1]{\orgdiv{Department of Mathematics}, \orgname{Kangwon National University (Gangneung Campus)}, \orgaddress{\street{7, Jukheon-gil}, \city{Gangneung-si}, \state{Gangwon-do}, \postcode{25457}, \country{Republic of Korea}}}

\affil*[2]{\orgdiv{SNU college}, \orgname{Seoul National University}, \orgaddress{\street{1 Gwanak-ro, Gwanak-gu}, \city{Seoul}, \postcode{08826}, \country{Republic of Korea}}}

\abstract{
For integers $m\geq3$ and $n\geq1$, an $m$-gonal form is called tight $\calT(n)$-universal if it represents exactly the positive integers $\calT(n)=\{ n, n+1, n+2, \ldots \}$. In this paper, we study the minimal criterion set $\mathrm{CS}(m,n)$ for tight $\calT(n)$-universality.
Our main result determines $\mathrm{CS}(m,n)$ for $n \geq8$, $3 \leq m \leq \left\lfloor \frac{3n+1}{2} \right\rfloor$, except for $(m,n)=(7,9)$ and $(7,10)$.
More precisely,
    \[
    \mathrm{CS}(m,n)=
    \begin{cases}
    \{ n, n+1, \ldots, 2n-1 \}, & m=5,\\
    \{ n, n+1, \ldots, 2n \},    & m\neq5.
    \end{cases}
    \]
We also establish the corresponding tight $\calT(n)$-universality results and show that the upper bound on $m$ is optimal.
}

\keywords{$m$-gonal forms, Universal forms, Tight universality, Minimal criterion set}

\pacs[MSC2020]{11D09, 11E25, 11E20}

\maketitle

\section{Introduction}

For an integer $m \geq 3$, an $m$-gonal number is defined by
    \[
    P_m(x)=\frac{ (m-2)x^2 - (m-4)x }{2} 
    \]
for a nonnegative integer $x$. If we allow $x$ to be any integer, then $P_m(x)$ is called a \emph{generalized $m$-gonal number}.
A polynomial of the form
    \[
    f(x_1, x_2, \ldots, x_\ell) = a_1 P_m(x_1) + a_2 P_m(x_2) + \cdots + a_\ell P_m(x_\ell)
    \]
with nonnegative integer coefficients $a_1, a_2, \ldots, a_\ell$ is called a \emph{sum of $m$-gonal numbers} or an \emph{$m$-gonal form}.
For brevity, we denote such an $m$-gonal form $f$ by $\qf{a_1, a_2, \ldots, a_\ell}_m$.

The study of representations of integers as sums of $m$-gonal numbers has a long and venerable history in number theory.
In 1638, Fermat conjectured that every positive integer $n$ can be represented as a sum of at most $m$ of the $m$-gonal numbers.
This conjecture was proven by Lagrange, Gauss, and Cauchy, completing the proof of Fermat's polygonal number theorem.
A natural generalization of Fermat's polygonal number theorem is to determine all $m$-gonal forms that represent every positive integer, which are called \emph{universal $m$-gonal forms}.
In 1862, Liouville initiated this research by determining all universal ternary triangular forms $\qf{a_1, a_2, a_3}_3$.
In 1917, Ramanujan provided a list of 55 candidate universal quaternary diagonal quadratic forms $\qf{a_1, a_2, a_3, a_4}_4$ \cite{sR-1917}.
In 1927, Dickson confirmed Ramanujan's list with one exception, which was shown to be non-universal \cite{leD-1927}.
The classification of universal quadratic forms culminated in the Conway--Schneeberger Fifteen Theorem \cite{jhC-2000} and the Bhargava--Hanke 290-Theorem \cite{mB-2000}.
The Fifteen Theorem states that a classical positive definite integral quadratic form is universal if and only if it represents the set $\{1, 2, 3, 5, 6, 7, 10, 14, 15\}$.
Similarly, the 290-Theorem provides a criterion for non-classical integral quadratic forms using a finite set of 29 integers. These theorems establish finite criteria for the universality of integral quadratic forms.

Recently, research has extended these finiteness criteria to generalized $m$-gonal forms, focusing on the concept of tight $\calT(n)$-universality. For a positive integer $n$, an $m$-gonal form $f$ is said to be a \emph{tight $\calT(n)$-universal form} if the set of positive integers represented by $f$ is exactly
    \[
    \calT(n)= \{ z \in \Z^+ \mid z \geq n \} = \{n, n+1, n+2, \ldots \}.
    \]
Just as the universality of an integral quadratic form can be verified using a finite set of integers by the Fifteen Theorem or 290 Theorem, it is essential to specify a \emph{minimal criterion set} (or a Conway--Schneeberger set), denoted by $\mathrm{CS}(m,n)$, to verify the tight $\calT(n)$-universality of an $m$-gonal form.
A minimal tight $\calT(n)$-universality criterion set $\mathrm{CS}(m,n)$ of $m$-gonal forms is a finite subset of $\calT(n)$ such that an $m$-gonal form $f$ is tight $\calT(n)$-universal if and only if $f$ represents no positive integer less than $n$ and represents every integer in $\mathrm{CS}(m,n)$.
Moreover, no proper subset of $\mathrm{CS}(m,n)$ has this property.
While specific cases of $\mathrm{CS}(m,n)$ have been determined, a general characterization for arbitrary $m$ and $n$ remains an open problem.
For the pairs $(m, n)$ indicated by black dots in Figure \ref{fig:CS(m,n)}, the sets $\mathrm{CS}(m,n)$ have been determined in previous works.
The known results on $\mathrm{CS}(m,n)$ are available in \cite[Table 1]{jJ-mK-2024} and \cite{mKim-2022, mKim-bkO-2021}.

\begin{figure}[htp]
\centering
\begin{tikzpicture}[scale=0.25]

\draw[->, thick] (0,0) -- (45,0) node[right] {$m$};
\draw[->, thick] (0,0) -- (0,43) node[above] {$n$};


\foreach \m in {3, 4, 5, 7, 8, 9, 10, 15, 20, 25, 30, 35, 40} {
  \draw (\m,0.1) -- (\m,-0.3);
  \node[below, font=\tiny] at (\m,0) {\m};
}

\foreach \n in {1, 2, 3, 4, 5, 6, 7, 8, 9, 10, 11, 15, 20, 25, 26, 27, 30, 35, 40} {
  \draw (0.1,\n) -- (-0.3,\n);
  \node[left, font=\tiny] at (0,\n) {\n};
}

\draw[dashed, thin, blue, domain=8:23.25] plot (\x, {2*\x - 5});
\node at (25, 45.5) [above left, font=\small, rotate=63] {$m = \tfrac{1}{2}(n+5)$};

\draw[dashed, thin, blue, domain=12.5:43.5] plot (\x, {(2*\x - 1)/3});
\node at (40, 26) [below right, font=\small, rotate=33] {$m=\tfrac{1}{2}(3n+1)$};

\draw[dashed, thin, blue] (0, 8) -- (44.5, 8);
\node at (44, 8) [right, font=\small] {$n = 8$};


\foreach \m in {3, 4}{
    \foreach \n in {1, ..., 41}{
        \fill[black] (\m,\n) circle (0.16);
    }
}

\foreach \m in {5}{
    \foreach \n in {1, 7, 8, ..., 41}{
        \fill[black] (\m,\n) circle (0.16);
    }
}

\foreach \m in {7}{
    \foreach \n in {11, ..., 41}{
        \fill[black] (\m,\n) circle (0.16);
    }
}

\foreach \m in {8} {
    \foreach \n in {1, 2, ..., 41} {
        \fill[black] (\m,\n) circle (0.16);
    }
}

\foreach \m in {9,...,23} {
    \pgfmathtruncatemacro{\nstart}{2*\m-5}
    \foreach \n in {\nstart,...,41} {
        \fill[black] (\m,\n) circle (0.16);
    }
}

\foreach \m in {9,...,43} {
    \foreach \n in {8,...,41} {
        \pgfmathparse{and(\m > (0.5*\n + 2.5), \m <= (1.5*\n + 0.5))}
        \ifnum\pgfmathresult=1
            \ifnum \n < 27
                \fill[green] (\m,\n) circle (0.16);
            \else
                \fill[yellow] (\m,\n) circle (0.16);
            \fi
        \fi
    }
}

\fill[green] (7,8) circle (0.16);

\foreach \m in {13,...,43} {
    \foreach \n in {8,...,41} {
        \pgfmathparse{\m > (1.5*\n + 0.5) ? 1 : 0}
        \ifnum\pgfmathresult=1
            \fill[red] (\m,\n) circle (0.16);
        \fi
    }
}


\end{tikzpicture}
\caption{Known and newly treated regions for $\mathrm{CS}(m,n)$. Black dots indicate pairs treated in previous works. Green, yellow, and red dots indicate the regions treated in Sections~\ref{Sec:FromT(8)toT(26)}, \ref{Sec:FromT(27)}, and \ref{Sec:Optimality}, respectively.}

\label{fig:CS(m,n)}
\end{figure}

In this paper, we extend the previous results on $\mathrm{CS}(m,n)$ to a much broader range of $(m, n)$ by focusing on two specific families of $m$-gonal forms:
    \[
    F = \langle n, n, n+1, \dots, 2n-1 \rangle_m \quad \text{and} \quad G = \langle n, n+1, \dots, 2n \rangle_m.
    \]
In Section~\ref{Sec:qf{1222}_m}, we provide the necessary preliminaries and establish the representability of sufficiently large integers by the specific $m$-gonal form $\qf{1,2,2,2}_m$, which serves as a building block for our later proofs.
In Sections~\ref{Sec:FromT(8)toT(26)} and~\ref{Sec:FromT(27)}, we prove that the $m$-gonal forms $F$ and $G$ are tight $\calT(n)$-universal for the pairs $(m,n)$ in the green and yellow regions, corresponding respectively to $8\leq n\leq26$ and $n\geq27$, under the common condition $\frac{1}{2}(n+5) < m \leq \lfloor \frac{1}{2}(3n+1) \rfloor$.
To show that the above universality range is optimal, Section~\ref{Sec:Optimality} focuses on the red region, corresponding to $n\geq8$ and $m\geq\frac{1}{2}(3n+2)$.
We show that, in this region, at least one of the forms $F$ and $G$ fails to represent a certain integer in $\calT(n)$ and hence is not tight $\calT(n)$-universal.
Combining the preceding universality results with the escalation results in \cite[Proposition 3.5]{jJ-mK-2023}, we obtain $\mathrm{CS}(m,n)=\{ n, n+1, \ldots, 2n \}$ for the pairs $(m,n)$ in the green and yellow regions.
Since the set of generalized hexagonal numbers coincides with the set of triangular numbers, the case $m=6$ reduces to the case $m=3$.
Together with the previously known results for the pairs $(m,n)$ indicated by the black dots, these results determine the minimal tight $\calT(n)$-universality criterion set for all pairs $(m,n)$ satisfying $n\geq8$ and $3 \leq m \leq \left\lfloor\frac{1}{2}(3n+1)\right\rfloor$, apart from the two remaining cases $(m,n)=(7,9)$ and $(7,10)$.

\smallskip

\begin{THM}\label{Thm:MainTheorem}
Let $n\geq 8$ and $3 \leq m \le \lfloor \frac{3n+1}{2} \rfloor$, with $(m,n)\notin\{ (7,9), (7,10) \}$. Then
    \[
    \mathrm{CS}(m,n)=
    \begin{cases}
    \{n,n+1,\dots,2n-1\}, & \text{if } m=5,\\[1mm]
    \{n,n+1,\dots,2n\},    & \text{if } m\neq5.
    \end{cases}
    \]
Equivalently, an $m$-gonal form $f$ is tight $\calT(n)$-universal if and only if $f$ represents no positive integer less than $n$ and represents every integer in $\mathrm{CS}(m,n)$.
Moreover, the above range of $m$ is optimal in the following sense; for every $n \geq 8$ and every integer $m \geq \frac{3n+2}{2}$, the set $\{ n, n+1, \ldots, 2n\}$ is not a criterion set for tight $\calT(n)$-universality.
\end{THM}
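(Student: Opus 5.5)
The argument splits into three parts: the region already covered by earlier work, the new green/yellow region, and the optimality claim in the red region.

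\textbf{Known cases.} For the pairs marked by black dots in Figure~\ref{fig:CS(m,n)} I will quote the literature, namely \cite[Table 1]{jJ-mK-2024}, \cite{mKim-2022, mKim-bkO-2021}. These cover $m\in\{3,4,5,8\}$ for all $n$, $m=7$ with $n\ge 11$, and $9\le m\le 23$ with $n\ge 2m-5$. The case $m=6$ reduces to $m=3$, since generalized hexagonal numbers are exactly the triangular numbers. The exceptional value $m=5$, where the set is $\{n,\dots,2n-1\}$, comes entirely from these earlier results.

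\textbf{Escalation for the new region.} What remains are the pairs with $n\ge 8$ and $\frac12(n+5)<m\le\lfloor\frac{3n+1}{2}\rfloor$, together with $(7,8)$. Here I follow the escalation scheme of \cite[Proposition 3.5]{jJ-mK-2023}. A form representing nothing below $n$ has all coefficients $\ge n$, because $P_m(1)=1$. If it also represents $n,n+1,\dots,2n$, the smallest coefficients are forced: escalating one truant at a time up to $2n$, every candidate with all coefficients $\ge n$ contains $F=\qf{n,n,n+1,\dots,2n-1}_m$ or $G=\qf{n,n+1,\dots,2n}_m$ as a subform. So sufficiency reduces to showing that $F$ and $G$ are tight $\calT(n)$-universal.

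For this I plan the following steps, in order:
\begin{enumerate}
\item Split a target $N\ge n$ as $N = a + r$, where $a$ is one of the coefficients used linearly (taking $x=1$ contributes exactly $a$, and the coefficients realise every residue in a window of length $n$).
\item Represent the remainder $r$ by a subform that scales $\qf{1,2,2,2}_m$. Taking $x_i\in\{0,1\}$ in a block of coefficients $k,2k,2k,2k$ (for instance $n,2n$ is unavailable, so the block is chosen among the available coefficients) reduces the large-$N$ case to the result of Section~\ref{Sec:qf{1222}_m}, which says that $\qf{1,2,2,2}_m$ represents every sufficiently large integer in the required classes.
\item Handle all $N$ up to the explicit bound from that section by small representations, using only $x_i\in\{0,\pm1,2\}$ and the generalized values $P_m(-1)=m-3$ and $P_m(2)=m$.
\end{enumerate}
Sections~\ref{Sec:FromT(8)toT(26)} and~\ref{Sec:FromT(27)} carry this out for $8\le n\le 26$ (partly by computer) and for $n\ge 27$, respectively.

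\textbf{Minimality.} For each $t\in\{n,\dots,2n\}$ I exhibit a form that represents everything in $\calT(n)\setminus\{t\}$ and nothing below $n$. The natural choice is $F$ or $G$ with the coefficient $t$ removed and the gap patched by adding coefficients larger than $2n$ (such as $t+1$ or $2t$). This again uses the universality machinery above, and the construction is the one in \cite{jJ-mK-2023}.

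\textbf{Optimality.} For $m\ge\frac{3n+2}{2}$ I look for a specific $N$ just above $2n$ that neither $F$ nor $G$ represents. The smallest nonzero values of $P_m$ are $1$, $m-3$, $m$, and $3m-9$, so any sum with a nonlinear term is at least $n(m-3)$, which exceeds the linear sums. Candidate targets are $N=3n+j$, or targets near $n+(m-3)$. The linear combinations are sums of at most a few coefficients in $[n,2n]$, and the bound $m\ge (3n+2)/2$ should create a gap between $3n+\dots$ and $n(m-3)$ plus the minimum correction. Checking this gap carefully is the delicate step.

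The main obstacle is the medium range in the universality proof: integers too large for hand representations but below the effective bound of the $\qf{1,2,2,2}_m$ result. I would attack it first by exploiting many coefficients with $x\in\{-1,0,1,2\}$ to cover residues modulo $m-2$, and only then fall back on computation for $n\le 26$.
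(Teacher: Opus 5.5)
Your overall architecture matches the paper's: cite the literature for the previously treated pairs, use escalation to force $F$ or $G$ as a subform, and invoke the universality results of Sections~\ref{Sec:FromT(8)toT(26)}--\ref{Sec:FromT(27)}. The optimality step, however, would fail as planned. You look for an integer just above $2n$, such as $3n+j$ or something near $n+(m-3)$, that neither $F$ nor $G$ represents. This rests on two premises: that values with all $x_i\in\{0,1\}$ are sums of only a few coefficients, and that any term with $x_i\notin\{0,1\}$ exceeds all of them. The first premise is false, and the second fails near the boundary. The coefficients are consecutive integers (plus a second $n$ in $F$), so the sums of $i$ distinct coefficients form intervals that overlap for consecutive $i$. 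With $x_i\in\{0,1\}$ alone, $F$ already represents every integer from $n$ to $F(1,\dots,1)-n=\frac{3n^2-n}{2}$, and $G$ every integer from $n$ to $\frac{3n^2+n}{2}$, whatever $m$ is (e.g.\ $3n+j=(n+1+j)+(2n-1)$ for $0\le j\le n-3$). So nothing below roughly $\frac{3n^2}{2}$ can be the target. Near $m\approx\frac{3n+2}{2}$ the nonlinear values $n(m-3)=\frac{3n^2}{2}-O(n)$ sit inside the linear range rather than above it. The integers missed by $0/1$ values lie in the window $\bigl(F(1,\dots,1)-n,\,F(1,\dots,1)\bigr)$ just below the full sum (similarly for $G$), since every proper subsum omits a coefficient $\geq n$. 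The argument you need picks a target in that window and shows two things: $x_i=-1$ on a large coefficient overshoots, and $x_i=-1$ on the smallest coefficients leaves a remainder strictly between $0$ and $n$, or one that is not a subsum of the remaining coefficients. The paper uses three targets: $N=\frac{3n^2-n}{2}+1$ for $F$ when $m\geq\frac{3n+4}{2}$; $T=\frac{3n^2}{2}$ for $F$ when $n$ is even and $m=\frac{3n+2}{2}$; and $T_j=\frac{3n^2+n}{2}+j$ ($j=1,2$) for $G$ when $n$ is odd and $m=\frac{3n+3}{2}$. Requiring one integer missed by both forms is more than the statement needs, and it breaks at the boundary. For $n$ odd and $m=\frac{3n+3}{2}$, $F$ fills its whole window via $nP_m(-1)+(n+j)=\frac{3n^2-n}{2}+j$, which is why the paper switches to $G$ there.

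Your sketch of the universality mechanism is also off, although you end by deferring to Theorems~\ref{Thm:04} and~\ref{Thm:05}, as the paper does. A block $k,2k,2k,2k$ does not exist among coefficients in $[n,2n]$. Even if it did, taking $x_i\in\{0,1\}$ would give only linear sums, never values of $k\qf{1,2,2,2}_m$. The missing idea is to give two coefficients symmetric about a centre $c$ the same variable, $(c-j)P_m(y)+(c+j)P_m(y)=2cP_m(y)$. Then seven consecutive coefficients $c-3,\dots,c+3$ realize $c\qf{1,2,2,2}_m$. For $8\leq n\leq 26$ the paper takes $c=p$ prime with $n+3\leq p\leq 2n-4$. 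It fixes the residue modulo $p$ with two further coefficients and arbitrary $x\in[0,p-1]$ (Lemma~\ref{Lem:02}). A single coefficient used linearly cannot do this, since the $n-6$ remaining coefficients give fewer than $p$ residues. For $n\geq 27$ the head $\qf{n,\dots,n+6}_m$ supplies $(n+3)\qf{1,2,2,2}_m$, the tail's $0/1$ values supply a long interval $V$, and the shifted intervals overlap. So there is no medium range left to handle by computer. The junction in Lemma~\ref{Lem:07} between $\frac{3n^2-n}{2}$ and $n(m-3)+2n+1$ is exactly where $m\leq\frac{3n+1}{2}$ is used; it fails at $T=n(m-3)+2n$ for $n$ even and $m=\frac{3n+2}{2}$. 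Finally, minimality needs no universality machinery. The form with coefficients $\{n,\dots,2n\}\setminus\{t\}$ represents nothing below $n$ and every element of $\{n,\dots,2n\}\setminus\{t\}$, but not $t$ (for $t=2n$ because $P_m(x)\neq 2$ when $m\neq 5$). This is the necessity half of the escalation that the paper invokes.
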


\smallskip

The remainder of this paper is devoted to establishing the results described above.
To study tight $\calT(n)$-universality, we use the language of quadratic spaces and lattices from the theory of quadratic forms, following the conventions of \cite{otO-1973}. For the necessary notions and terminology, we refer the reader to \cite{jJ-mK-2023}, \cite{jJ-mK-2024} and \cite{mKim-2022}.

\smallskip

\section{Representations by the $m$-gonal form $\qf{1,2,2,2}_m$}\label{Sec:qf{1222}_m}

In this section, we establish several technical results concerning the representability of positive integers by the $m$-gonal form $f(\mathbf{x}) = \qf{1,2,2,2}_m$. These results serve as essential building blocks for the proofs in the subsequent sections. To establish these results, we utilize the  properties of the well-known quadratic form $g(\mathbf{x}) = \qf{1,2,2,2}_4 = x_1^2 + 2x_2^2 + 2x_3^2 + 2x_4^2$. As a result, we show that  $f(\mathbf{x}) = \qf{1,2,2,2}_m$ represents all sufficiently large integers for a given $m$. Throughout this section, let $m\geq3$ be fixed.

For clarity, the $m$-gonal form $f(\mathbf{x}) = \qf{1,2,2,2}_m$ is written as:
    \begin{align*}
    f(\mathbf{x})
    = & P_m(x_1) + 2P_m(x_2) + 2P_m(x_3) + 2P_m(x_4)  \\
    = & \tfrac{(m-2)}{2} \Bigl[ (x_1^2 + 2x_2^2 + 2x_3^2 + 2x_4^2) - (x_1 + 2x_2 + 2x_3 + 2x_4) \Bigr] \\
       & + (x_1 + 2x_2 + 2x_3 + 2x_4)\\
    = & \tfrac{(m-2)}{2} \left[ g(\mathbf{x}) - \ell(\mathbf{x}) \right] + \ell(\mathbf{x})
    \end{align*}
where $g(\mathbf{x}) = x_1^2 + 2x_2^2 + 2x_3^2 + 2x_4^2 = \qf{1, 2, 2, 2}_4$ is a quadratic form and $\ell(\mathbf{x}) = x_1 + 2x_2 + 2x_3 + 2x_4$ is a linear form.

Let $\alpha = (m-2)q +r$ be a positive integer that satisfies $0 \leq r \leq m-3$. If the following system of equations
    \begin{equation}\label{eq:1}\tag{$*$}
    \begin{cases}
    ~ \ell(\mathbf{x}) = (m-2)h + r, \\
    ~ g(\mathbf{x}) = 2(q-h) + (m-2)h + r
    \end{cases}
    \end{equation}
has an integral solution $\mathbf{x}$, then $\mathbf{x}$ is an integral solution of the equation
    \[
    f(\mathbf{x}) = (m-2)q +r,
    \]
that is, the integer $\alpha = (m-2)q +r$ is represented by the $m$-gonal form $f(\mathbf{x}) =\qf{1,2,2,2}_m$.
Note the following relation between the binary lattice $\binlattice{7}{\ell(\mathbf{x})}{\ell(\mathbf{x})}{g(\mathbf{x})}$ and $g(\mathbf{x})=\qf{1,2,2,2}_4$:
    \[
    \binlattice{7}{\ell(\mathbf{x})}{\ell(\mathbf{x})}{g(\mathbf{x})}
    =
    \begin{small}\begin{pmatrix} 1 & 1 & 1 & 1 \\ x_1 & x_2 & x_3 & x_4 \end{pmatrix}\end{small}
    \begin{small}\begin{pmatrix} 1 & 0 & 0 & 0 \\ 0 & 2 & 0 & 0 \\ 0 & 0 & 2 & 0 \\ 0 & 0 & 0 & 2 \end{pmatrix}\end{small}
    \begin{small}\begin{pmatrix} 1 & x_1 \\ 1 & x_2 \\ 1 & x_3 \\ 1 & x_4 \end{pmatrix}\end{small}.
    \]
Moreover, the representation
    \[
    7=1\cdot1^2+2\cdot1^2+2\cdot1^2+2\cdot1^2
    \]
by $g(\mathbf{x})=\qf{1,2,2,2}_4$ is unique up to isometry.
Hence, the system~\eqref{eq:1} has an integral solution if and only if the binary lattice
    \[
    A[h] = \binlattice{7}{~(m-2)h+r~}{~(m-2)h+r~}{~2(q-h)+(m-2)h+r~}
    \]
is represented by the quadratic form $g(\mathbf{x})=\qf{1,2,2,2}_4$.
For notational convenience, we write $|A[h]|:=\det A[h]$.
By the proof of \cite[Lemma 3.3]{dPark-2024}, since $g(\mathbf{x})=\qf{1,2,2,2}_4$ has class number one, the binary lattice $A[h]$ is represented by $g$ whenever $A[h]$ is positive definite and $\det A[h] \neq 4^s(16t + 14)$ for all nonnegative integers $s$ and $t$.
Therefore, the existence of such a binary lattice $A[h]$ guarantees that the integer $\alpha = (m-2)q +r$ is represented by the $m$-gonal form $f(\mathbf{x}) = \qf{1,2,2,2}_m$.

\smallskip

\begin{Rmk}\label{Rmk:01}\rm
The determinant of $A[h]$ is
    \begin{align*}
    |A[h]| &= 14(q-h)+7((m-2)h+r)-((m-2)h+r)^2 \\
             &= 14q+7r-r^2 -14h + (7-2r)(m-2)h - (m-2)^2 h^2,
    \end{align*}
and
    \[
    |A[h]| - |A[0]| = -14h + (m-2)(7-2r)h - (m-2)^2 h^2
    \]
as $|A[0]| = 14q+7r-r^2$.
This explicit formula will be repeatedly used in the proof of \text{Lemma \ref{Lem:01}.}
\end{Rmk}

\smallskip

\begin{Lem}\label{Lem:01} 
For each $q, r \in \Z$, there exists an integer $h$ such that $-4 \leq h \leq 3$ and $|A[h]| \neq 4^s(16t + 14)$ for all nonnegative integers $s$ and $t$.
\end{Lem}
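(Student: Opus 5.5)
The plan is a $2$-adic case analysis on the quadratic polynomial in $h$ from Remark~\ref{Rmk:01}. Write $M=m-2$, $D_0=|A[0]|=14q+7r-r^2$ and $c=-14+M(7-2r)$, so that
\[
|A[h]| = D_0 + ch - M^2h^2 .
\]
Call an integer \emph{bad} if it equals $4^s(16t+14)$ for some $s,t\ge 0$; the goal is to find $h\in\{-4,\dots,3\}$ with $|A[h]|$ not bad.

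A bad number is never odd. It is never $\equiv 2,6,10 \pmod{16}$. It is never $4\cdot(\text{odd})$, i.e.\ never $\equiv 4 \pmod 8$. In general, a bad number $N$ has even $2$-adic valuation plus one, $v_2(N)=2s+1$, and its odd part is $\equiv 7\pmod 8$.

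First I will note that $|A[h]|$ is always even. Indeed $r(7-r)$ is even and $ch-M^2h^2\equiv Mh-Mh^2\equiv 0 \pmod 2$. So parity alone never helps, and the argument must work modulo $16$ (and occasionally $64$). Since $|A[h]|\bmod 2^k$ depends only on $D_0$, $M$, $r$ modulo $2^k$, each fixed-depth check is a finite computation.

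The case split is on $M \bmod 4$.

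\textbf{Case $M$ odd.} Here $c$ is odd. The increments
\[
|A[h+1]|-|A[h]| = c - M^2(2h+1)
\]
are even, and their residues modulo $8$ vary with $h$. I expect that among four consecutive values $h$ the residues of $|A[h]|$ modulo $16$ realize at least two classes differing by $2$ or $4$ modulo $8$. Hence one of them is $\equiv 2,6,10\pmod{16}$ or $\equiv 4\pmod 8$, and so is not bad. I will verify this by tabulating the residues of $M$, $r$, and $D_0$ modulo $16$.

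\textbf{Case $M\equiv 2 \pmod 4$.} Here $M^2\equiv 4 \pmod{16}$ or $\pmod{32}$ according to the residue, and $c\equiv -14+2(7-2r)(\text{odd})$, which reduces modulo $4$ to a fixed residue. I will again compare $h=0,\pm1,\pm2$ modulo $16$.

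\textbf{Case $M\equiv 0 \pmod 4$.} Here $c\equiv 2 \pmod 4$, so $|A[1]|-|A[0]|\equiv 2\pmod 4$. Exactly one of $|A[0]|$, $|A[1]|$ is $\equiv 2\pmod 4$. If that one is not $\equiv 14 \pmod{16}$ we are done. Otherwise I will move to $h=-1,2,\dots$, using that the increments modulo $16$ are $c\pm M^2(2h+1)\equiv c\pmod{16}$ whenever $8\mid M$.

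The main obstacle is the subcase where the values $\equiv 2\pmod 4$ all land on $14 \pmod{16}$, while the values $\equiv 0\pmod 4$ are divisible by $16$ (for instance, $D_0 \equiv 0 \pmod{16}$ with $c\equiv 14 \pmod{16}$). Then one must peel off factors of $4$, and the valuation is not obviously bounded. To handle this I will use the full window $-4\le h\le 3$: the second difference $-2M^2$ and the quantities $c\pm M^2$ cannot all be highly divisible by $2$ simultaneously, since $c\equiv 2\pmod 4$. This should force some $|A[h]|$ with $v_2=2$, i.e.\ $\equiv 4\pmod 8$, or $v_2(|A[h]|)=4$ with a wrong odd part. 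I will reduce that final claim to a finite check modulo $64$ (or $256$ if necessary) over residues of $(D_0,M,r)$, which is where the choice of eight consecutive $h$ values should be exactly what is needed.
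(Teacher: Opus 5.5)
Your framework matches the paper's (a $2$-adic split on $M=m-2$ using $|A[h]|=D_0+ch-M^2h^2$), but no case is actually closed, and the one inference you commit to is false.

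In the odd case, knowing that two values lie in classes differing by $2$ modulo $8$ gives nothing, because bad numbers occupy both residues $0$ and $6$ modulo $8$. For example, take $m=11$, $r=0$, $q=256$. Then the four consecutive values $|A[-1]|,|A[0]|,|A[1]|,|A[2]|$ are $3454=16\cdot215+14$, $3584=4^4\cdot14$, $3552=4^2(16\cdot13+14)$ and $3358=16\cdot209+14$. Their residues modulo $8$ are $6,0,0,6$, and all four are bad.

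The missing observation is that a shift $|A[h]|-|A[0]|\equiv4\pmod 8$ always suffices, since it maps $\{0,6\}$ to $\{4,2\}$. One explicit $h$ produces such a shift in most cases:
\begin{itemize}
\item for odd $M$, take $h=-4$; the shift is $-4c-16M^2$ with $c$ odd;
\item for $4\mid M$, take $h=2$; the shift is $2c-4M^2$ with $c\equiv2\pmod4$.
\end{itemize}
In particular, your ``main obstacle'' for $4\mid M$ does not exist. There $|A[2k]|\equiv D_0+4k\pmod 8$, so in your example ($D_0\equiv0$, $c\equiv14\pmod{16}$) you already have $|A[2]|\equiv12\pmod{16}$, and no peeling of factors of $4$ is needed.

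The genuine difficulty is in the case you treat briefly, $M\equiv2\pmod4$. If $M(7-2r)\equiv6\pmod8$, then $c\equiv0\pmod8$ and $h=1$ gives the shift $c-M^2\equiv4\pmod8$. If instead $M(7-2r)\equiv2\pmod8$, then $c\equiv4\pmod8$ and $|A[h]|\equiv D_0\pmod8$ for every $h$. When $D_0\equiv0\pmod8$, every $|A[h]|$ is then $\equiv0$ or $8\pmod{16}$, and both are residues of bad numbers. So comparing $h=0,\pm1,\pm2$ modulo $16$, as you propose, can never conclude.

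You must work modulo $32$ and use that a bad $D_0$ is $\equiv14\pmod{16}$, $\equiv56\pmod{64}$, or $\equiv0\pmod{32}$ according as $s=0$, $s=1$, or $s\ge2$. The paper argues as follows:
\begin{itemize}
\item For odd $k,\ell$, the number $(k-\ell)c-(k^2-\ell^2)M^2$ is divisible by $32$ only if $8\mid k-\ell$, because $c=4\cdot\text{odd}$ and $M^2\equiv4\pmod{32}$.
\item Hence $h=-3,-1,1,3$ give all four residues $0,8,16,24\pmod{32}$ for $|A[h]|-D_0$.
\item A shift by $8$ settles $s=0$, landing in $6\bmod16$.
\item A shift by $16$ settles $s\ge1$, landing in $8\bmod32$ with odd part $\equiv1\pmod4$ when $s=1$, and in $16\bmod32$ when $s\ge2$.
\end{itemize}
Your window $\{0,\pm1,\pm2\}$ also works, but only through exactly this modulo-$32$ bookkeeping, which the proposal omits.
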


\begin{proof}
If $|A[0]| \neq 4^s(16t+14)$ for all $s, t \geq 0$, the statement holds with $h = 0$.
Suppose instead that $|A[0]| = 4^s(16t+14)$ for some $s, t \geq 0$.
Then,
    \[
    |A[0]| = 4^s(16t+14)
              \equiv
              \begin{cases} ~14 &\pmod{16}   \text{ if } s = 0, \\
                                     ~0    &\pmod{8}   ~\text{ if } s \geq 1.
              \end{cases}
    \]
Thus, if we can find $h$ such that
    \[
    |A[h]| - |A[0]| \equiv 4 \pmod{8},
    \]
then $|A[h]| \not\equiv 4^s(16t+14) \pmod{8}$ for all $s, t$, and the proof is complete. 
We will proceed with the proof by dividing into cases based on the conditions of $m$, using the above observation or similar results.

\smallskip

\Case{1)} $m$ is odd:
Set $h = -4$. Then,
    \[
    |A[-4]| - |A[0]| \equiv 56 - 4(m-2)(7-2r) - 16(m-2)^2 \equiv 4 \pmod{8}.
    \]

\smallskip

\Case{2)} $m \equiv 2 \pmod{4}$:
Set $h = 2$. Then,
    \[
    |A[2]| - |A[0]| \equiv -28 + 2(m-2)(7-2r) - 4(m-2)^2 \equiv 4 \pmod{8}.
    \]

\smallskip

\Case{3)}  $m \equiv 0 \pmod{4}$:
In this case, $(m-2)(7-2r) \equiv 2$ or $6 \pmod{8}$, and $(m-2)^2 \equiv 4 \pmod{32}$.
We consider two subcases.

\smallskip

Case 3-a) $(m-2)(7-2r) \equiv 6 \pmod{8}$:
    Set $h = 1$. Then,
    \[
    |A[1]| - |A[0]| \equiv -14 + (m-2)(7-2r) - (m-2)^2 \equiv 4 \pmod{8}.
    \]

\smallskip

Case 3-b) $(m-2)(7-2r) \equiv 2 \pmod{8}$:
    For all odd $h$,
    \[
    |A[h]| - |A[0]| = -14h + (m-2)(7h-2hr) - h^2(m-2)^2 \equiv 0 \pmod{8},
    \]
    as $h \equiv \pm 1 \pmod{4}$.
    If $k, \ell$ are odd integers such that $|A[k]| - |A[0]| \equiv |A[\ell]| - |A[0]| \pmod{32}$, then $k - \ell \equiv 0 \pmod{8}$ because
    \begin{align*}
    & (|A[k]| - |A[0]|) - (|A[\ell]| - |A[0]|) \\
    & = (k-\ell)\underbrace{[-14 + (m-2)(7-2r)]}_{\equiv 4 \pmod{8}} - \underbrace{(k^2 - \ell^2)(m-2)^2}_{\equiv 0 \pmod{32}}
    \end{align*}
    is a multiple of $32$.
    Thus, the residues of $|A[h]| - |A[0]| \pmod{32}$ for $h = -3, -1, 1, 3$ are pairwise distinct.
    Consequently, there exist $h_1, h_2 \in \{-3, -1, 1, 3\}$ such that
    \[
    |A[h_1]| - |A[0]| \equiv 8 \pmod{32}, \quad |A[h_2]| - |A[0]| \equiv 16 \pmod{32}.
    \]
    From
    \[
    |A[0]| = 4^s(16t+14)
              \equiv
              \begin{cases} ~14 &\pmod{16}   \text{ if } s = 0, \\
                                     ~56  &\pmod{64}   \text{ if } s = 1, \\
                                     ~0    &\pmod{32}   \text{ if } s \geq 2.
              \end{cases}
    \]
    we deduce:
    \[
    |A[h_1]| \equiv |A[0]| + 8   \equiv 14 + 8   \equiv 6 \pmod{16}  \text{ if } s=0,
    \]
    and
    \[
   |A[h_2]|  \equiv |A[0]| + 16 \equiv \begin{cases}
                                                             56 + 16 \equiv  8 \pmod{32} & \text{ if } s=1,\\ 
                                                             0 + 16  \equiv 16 \pmod{32} & \text{ if } s \geq 2.
                                                              \end{cases}
    \]
In summary, we can choose $h=h_1$ if $s=0$, and $h=h_2$ if $s \geq 1$.

\smallskip

\noindent In either case, $|A[h]| \neq 4^s(16t+14)$ for all nonnegative integers $s$ and $t$.
\end{proof} 

\smallskip

\begin{Thm}\label{Thm:01}
The $m$-gonal form $f(\mathbf{x})=\qf{1,2,2,2}_m$ represents all positive integers $\alpha$ satisfying $\alpha > B(m)$, where $B(m)=\frac{1}{7}(8m^3-34m^2+68m-64)$.
\end{Thm}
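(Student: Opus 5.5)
To prove Theorem~\ref{Thm:01}, write $\alpha=(m-2)q+r$ with $0\le r\le m-3$. By the discussion preceding Remark~\ref{Rmk:01}, $\alpha$ is represented by $f=\qf{1,2,2,2}_m$ as soon as some $A[h]$ is positive definite and satisfies $|A[h]|\neq 4^s(16t+14)$. Lemma~\ref{Lem:01} already supplies an $h\in\{-4,\dots,3\}$ meeting the determinant condition. So the only remaining task is to check that, once $\alpha>B(m)$, \emph{every} $A[h]$ with $-4\le h\le 3$ is positive definite. Since the $(1,1)$ entry is $7>0$, this reduces to showing $|A[h]|>0$ for all eight values of $h$.

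To check positivity, use Remark~\ref{Rmk:01}:
\[
|A[h]| = 14(q-h)+7\bigl((m-2)h+r\bigr)-\bigl((m-2)h+r\bigr)^2 .
\]
Rewrite it in terms of $\alpha$ using $14q=\tfrac{14}{m-2}(\alpha-r)$. Then $|A[h]|>0$ becomes
\[
\alpha > r+\frac{m-2}{14}\Bigl[\bigl((m-2)h+r\bigr)^2-7\bigl((m-2)h+r\bigr)+14h\Bigr].
\]
The right-hand side is a quadratic in $h$ and in $r$, so its maximum over $h\in\{-4,\dots,3\}$ and $0\le r\le m-3$ is attained at extreme points. Compute the bracket at $h=-4$ and $h=3$, each with $r=0$ and $r=m-3$.

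For $h=3$, $r=m-3$, the quantity $u=(m-2)h+r=4m-9$ gives bracket $(4m-9)^2-7(4m-9)+42=16m^2-100m+186$. For $h=-4$, $r=0$, we get $u=-4(m-2)$ and bracket $16(m-2)^2+28(m-2)-56$. Multiplying by $(m-2)/14$ and adding $r$ should yield exactly $B(m)=\frac{1}{7}(8m^3-34m^2+68m-64)$; that is, the bound $B(m)$ is designed to be the maximum of these expressions. I expect the case $h=3$, $r=m-3$ to be the extremal one. As a check, $\frac{m-2}{14}(16m^2-100m+186)+(m-3)$ expands to $\frac{1}{7}\bigl(8m^3-66m^2+193m-207\bigr)$. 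That does not match $B(m)$, so I will recompute carefully and also test $h=-4$ with $r=m-3$, where $u=-3m+5$.

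The main obstacle is this bookkeeping: finding which pair $(h,r)$ maximizes the threshold, and verifying that it is at most $B(m)$ for every $m\ge3$, including the small $m$ where the leading terms do not yet dominate. If the pure extremal computation gives a bound slightly below $B(m)$, that suffices, since we only need $\alpha>B(m)\Rightarrow|A[h]|>0$. The inequality can then be checked by comparing cubic polynomials in $m$ and handling $m=3,4,5$ directly.

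Finally, combine the two pieces. For $\alpha>B(m)$, all $A[h]$ with $-4\le h\le3$ are positive definite, and Lemma~\ref{Lem:01} gives one among them whose determinant avoids $4^s(16t+14)$. By the class-number-one argument (\cite[Lemma~3.3]{dPark-2024}), that $A[h]$ is represented by $g$, so \eqref{eq:1} is solvable and $f(\mathbf{x})=\alpha$.
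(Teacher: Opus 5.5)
As written, this is not yet a proof. The reduction is sound, but you never verify its one substantive step: that the positivity threshold stays below $B(m)$.

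Your setup is fine. Put $u=(m-2)h+r$. Since the $(1,1)$ entry is $7$, positive definiteness of $A[h]$ is exactly $\alpha>T(h,r):=r+\frac{m-2}{14}\bigl(u^2-7u+14h\bigr)$. Proving this for all eight $h$ is also what the paper effectively does.

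Three things in the proposal are off:
\begin{itemize}
\item Your expectation that $(h,r)=(3,m-3)$ is the extremal case is wrong for $m\ge4$.
\item The $(h,r)=(-4,0)$ value is not ``exactly $B(m)$''.
\item Your check value has a slip. It should be $\frac17(8m^3-66m^2+200m-207)$; you dropped the $7m$ coming from adding $r=m-3$.
\end{itemize}

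To finish the argument, note that $T$ is $\frac{m-2}{14}u^2$ plus an affine function of $(h,r)$. Hence it is convex, and its maximum over $-4\le h\le 3$, $0\le r\le m-3$ occurs at a vertex. At the vertices:
\begin{itemize}
\item $T(-4,0)=\frac17(8m^3-34m^2+12m+48)=B(m)-8(m-2)$.
\item $T(3,m-3)=B(m)-\frac17(32m^2-132m+143)$, and this quadratic has negative discriminant.
\item $T(-4,m-3)\le T(-4,0)$ and $T(3,0)\le T(3,m-3)$.
\end{itemize}
So every threshold lies strictly below $B(m)$ for all $m\ge3$, and no separate small-$m$ check is needed. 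Your final combination with Lemma~\ref{Lem:01} and the class-number-one criterion then goes through.

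The paper reaches the same positivity without locating the maximum. It writes $B(m)=(m-2)\bigl(\frac17(8m^2-18m+25)+1\bigr)$ and uses $\alpha<(m-2)(q+1)$ to get $q>\frac17(8m^2-18m+25)$. It then bounds the pieces separately, $7u\ge-28(m-2)$, $u^2\le 16(m-2)^2$ and $-14h\ge-42$, which combine to $|A[h]|>42-14h\ge 0$. This is how $B(m)$ was built, and it explains why no vertex of your exact threshold equals $B(m)$. The termwise bound pairs the worst $u$ (attained at $h=-4$) with the worst $-14h$ (attained at $h=3$), which costs $7(m-2)$. The $+1$ used to pass from $\alpha$ to $q$ costs another $m-2$.

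The paper's route is a two-line estimate. Yours, once completed, is slightly sharper: for $m\ge4$ it shows that $\alpha>B(m)-8(m-2)$ already suffices. The price is the vertex bookkeeping you left open.
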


\begin{proof}
Write $\alpha = (m-2)q + r$ such that $0 \le  r \le m-3$.
By Lemma \ref{Lem:01}, there is an integer $h$ such that $-4 \le h \le3$ satisfying $|A[h]| \neq 4^s(16t + 14)$ for all nonnegative integers $s$ and $t$.
To conclude that $\alpha$ is represented by $f=\qf{1,2,2,2}_m$, it suffices to show that the binary lattice $A[h]$ is positive definite.

Since $B(m) = (m-2)\left(\frac{1}{7}(8m^2-18m+25)+1\right)$ and $\alpha=(m-2)q+r<(m-2)(q+1)$, the assumption $\alpha>B(m)$ implies
    \[
    q > \frac{1}{7}(8m^2-18m+25).
    \]
Moreover, from $-4 \leq h \leq 3$ and $0 \leq r \leq m-3$, we have
    \[
    -4(m-2) \leq (m-2)h+r < 4(m-2).
    \]
Therefore,
    \begin{align*}
    |A[h]| &= 14(q-h) + 7((m-2)h + r) - ((m-2)h+r)^2 \\
             &\geq 14(q-h) - 28(m-2) - 16(m-2)^2 \\
             &> 14 \cdot \frac{1}{7}(8m^2 -18m + 25) - 28(m-2) - 16(m-2)^2 -14h \\
             &= 42 - 14h \geq 0.
    \end{align*}
Thus $A[h]$ is positive definite. Consequently, $\alpha$ is represented by \text{$f=\qf{1,2,2,2}_m$}.
\end{proof}

\smallskip

\section{Tight $\calT(n)$-universality of $F$ and $G$ for $8 \leq n \leq 26$}\label{Sec:FromT(8)toT(26)}

In this section, we establish the tight $\calT(n)$-universality corresponding to the green region in Figure~\ref{fig:CS(m,n)}.
More precisely, for $8 \leq n \leq26$ and $\frac{n+5}{2}<m \leq \left\lfloor\frac{3n+1}{2}\right\rfloor$, we prove that the $m$-gonal forms
    \[
    F = \qf{n, n, n+1, \ldots, 2n-1}_m \quad \text{and} \quad G = \qf{n, n+1, \ldots, 2n}_m
    \]
are tight $\calT(n)$-universal.

\smallskip

\begin{Lem}\label{Lem:02}
Let $p$ be an odd prime, and suppose that $a$ and $b$ are relatively prime to $p$.
Then, for any integer $k \in \Z$, there exist integers $x_1, x_2$ with
    \[
    0 \leq x_1, x_2 \leq p-1 \quad \text{ such that } \quad aP_m(x_1) + bP_m(x_2) \equiv k \pmod{p}.
    \]
\end{Lem}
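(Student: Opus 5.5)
The plan is to reduce the statement to a fact about binary quadratic forms over $\mathbb{F}_p$. We split according to whether $p$ divides $m-2$. Throughout, we use that $2$ is invertible modulo $p$, so $P_m(x)$ makes sense as a polynomial function on $\Z/p\Z$. We also use that as $x$ runs over $0,1,\ldots,p-1$ it runs over a complete residue system modulo $p$. Hence it suffices to find residues $x_1,x_2 \in \Z/p\Z$ with $aP_m(x_1)+bP_m(x_2)\equiv k \pmod p$.

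\emph{Case $p \mid m-2$.} Here $m-4\equiv -2 \pmod p$, so
\[
P_m(x)=\tfrac{(m-2)x^2-(m-4)x}{2}\equiv x \pmod p .
\]
Thus $P_m$ is the identity map modulo $p$. Take $x_2=0$ and let $x_1$ be the representative in $\{0,\ldots,p-1\}$ of $a^{-1}k \bmod p$; this uses $p\nmid a$.

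\emph{Case $p\nmid m-2$.} We complete the square:
\[
8(m-2)P_m(x)=\bigl(2(m-2)x-(m-4)\bigr)^2-(m-4)^2 .
\]
Since $2(m-2)$ is a unit modulo $p$, the map $x\mapsto y=2(m-2)x-(m-4)$ is a bijection of $\Z/p\Z$. Set $c=(8(m-2))^{-1} \bmod p$ and $d=(m-4)^2$. Then the congruence to solve becomes
\[
ac\,y_1^2+bc\,y_2^2\equiv k+c(a+b)d \pmod p ,
\]
with $y_1,y_2$ arbitrary residues. Here $ac$ and $bc$ are units, so it suffices to show that a nondegenerate diagonal binary form $Ay_1^2+By_2^2$ over $\mathbb{F}_p$ represents every element $K$.

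This is the standard pigeonhole argument. The set $\{Ay_1^2 : y_1\in\mathbb{F}_p\}$ has exactly $(p+1)/2$ elements, since there are $(p-1)/2$ nonzero squares together with $0$. Likewise $\{K-By_2^2 : y_2\in\mathbb{F}_p\}$ has $(p+1)/2$ elements. Since $(p+1)/2+(p+1)/2>p$, the two sets intersect, which gives $Ay_1^2=K-By_2^2$. Pulling $y_1,y_2$ back to $x_1,x_2\in\{0,\ldots,p-1\}$ through the bijection above completes this case.

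No step is really an obstacle. The only point needing care is the degenerate case $p\mid m-2$, where completing the square fails and must be replaced by the observation that $P_m$ becomes linear modulo $p$. The hypothesis that $p$ is odd is used to invert $2$ and $8$, and to count squares in the pigeonhole step.
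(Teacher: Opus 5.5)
Your proof is correct and follows the paper's argument. It uses the same split according to whether $p \mid m-2$, the same completion of the square $8(m-2)P_m(x)=(2(m-2)x-(m-4))^2-(m-4)^2$ with an invertible linear change of variables, and the same observation that $P_m(x)\equiv x \pmod p$ in the degenerate case; the only addition is that you prove, via the pigeonhole count of $(p+1)/2$ values, that $Ay_1^2+By_2^2$ with $p\nmid AB$ represents every residue modulo $p$, which the paper simply quotes as known.
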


\begin{proof}
First suppose that $p \nmid m-2$.
It is known that there exist $u_1, u_2 \in \Z$ satisfying
    \[
    au_1^2 + bu_2^2 \equiv 8k(m-2) + (a+b)(m-4)^2 \pmod{p}.
    \]
Since $p \nmid m-2$, there exist integers $x_1, x_2$ such that $0 \leq x_1,  x_2 \leq p-1$  and
    \[
    2(m-2) x_i - (m-4) \equiv u_i \pmod{p} \quad \text{ for } i=1, 2.
    \]
Substituting this into the expression for $P_m(x)$, we obtain:
     \begin{align*}
    & aP_m(x_1)+bP_m(x_2)\\
    &= a \,\frac{(m-2)x_1^2-(m-4)x_1}{2} + b \,\frac{(m-2)x_2^2-(m-4)x_2}{2}\\
    &= a \,\frac{[2(m-2)x_1 - (m-4)]^2}{8(m-2)} + b \,\frac{[2(m-2)x_2 - (m-4)]^2 }{8(m-2)} - (a+b)\,\frac{~(m-4)^2}{8(m-2)} \\
    &\equiv \frac{au_1^2 + bu_2^2 - (a+b)(m-4)^2 }{8(m-2)} \pmod{p} \\
    &\equiv k \pmod{p}.
    \end{align*}

Now suppose that $p \mid m-2$.
Then $P_m(x) \equiv x \pmod p$ as $m-4 \equiv -2 \pmod p$.
Since $p \nmid a$, we may choose $x_1$ with $0 \leq x_1 \leq p-1$ such that $ax_1 \equiv k \pmod p$ and take $x_2=0$.
Then
    \[
    aP_m(x_1)+bP_m(x_2) \equiv k \pmod p.
    \]
\end{proof}

\smallskip

\begin{Rmk}\label{Rmk:03}\rm
It is known that for all $n \geq 25$, there always exists a prime $p$ between $n$ and $\frac{6}{5}n$,
as established by Jitsuro Nagura \cite{jNagura-1952} in refinement of Ramanujan's proof of Bertrand's postulate.
Furthermore, it can be verified that for all $12 \leq n \leq 24$, there exists a prime $p$ between $n$ and $2n-10$.
By combining these results, we conclude that for all integers $n \geq 9$, there always exists an odd prime $p$ satisfying
    \[
    n+3 \leq p \leq 2(n+3)-10=2n-4
    \]
which meets the required assumption in Theorem \ref{Thm:02}. In particular,
    \[
    n \leq p-3 \quad\text{and}\quad p+3\leq2n-1,
    \]
so the seven consecutive integers $p-3, p-2, \ldots, p+3$ occur among the coefficients of \text{$H = \qf{n,n+1,\ldots,2n-1}_m$.}
\end{Rmk}

\smallskip

\begin{Thm}\label{Thm:02}
For $n \geq 9$ and a prime $p$ such that $n+3 \leq p \leq 2n-4$, the $m$-gonal form
    \[
    H = \qf{n, n+1, \ldots , 2n-1}_m
    \]
represents all positive integers $\alpha$ with
    \[
    \alpha > p\,B(m) + (4n-3) P_m (p-1).
    \]
\end{Thm}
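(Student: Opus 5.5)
Given $\alpha$, we will write it as $pN + R$, where $N$ is to be represented by $\qf{1,2,2,2}_m$ and $R$ by the remaining variables of $H$. Among the coefficients of $H$ are the seven consecutive integers $p-3,\dots,p+3$ (Remark~\ref{Rmk:03}), and we use the pairs
\[
p\cdot 1 = p,\qquad p\cdot 2 = (p-1)+(p+1) = (p-2)+(p+2) = (p-3)+(p+3).
\]
For each of the three pairs $(p-i,p+i)$ with $i=1,2,3$, set $x_{p-i}=x_{p+i}=y_i$ for a common variable $y_i$; then $(p-i)P_m(y_i)+(p+i)P_m(y_i)=2pP_m(y_i)$. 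Together with $x_p=y_0$, these seven coefficients yield
\[
p\bigl(P_m(y_0)+2P_m(y_1)+2P_m(y_2)+2P_m(y_3)\bigr)=p\,f(\mathbf y),
\]
with $f=\qf{1,2,2,2}_m$. The other $n-7$ coefficients of $H$ are all prime to $p$: they lie in $[n,2n-1]$ and differ from $p$, and $2p>2n-1$ because $p\ge n+3$. Since $n\ge 9$, at least two such coefficients $a,b$ exist; we may take $a=n$ and $b=n+1$, as $n+1<p-3$.

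Next we handle the residue. Apply Lemma~\ref{Lem:02} with these $a,b$ and $k=\alpha$ to obtain $0\le x_1,x_2\le p-1$ with $aP_m(x_1)+bP_m(x_2)\equiv \alpha \pmod p$. Set all remaining variables to zero and put $R=aP_m(x_1)+bP_m(x_2)$. Since $P_m$ is increasing on nonnegative integers,
\[
0\le R\le (a+b)P_m(p-1)\le (4n-3)P_m(p-1),
\]
because $a+b\le (2n-1)+(2n-2)=4n-3$. This explains where the constant in the statement comes from; the slack in the bound could absorb any other admissible choice of $a,b$.

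Finally, set $N=(\alpha-R)/p$, which is an integer with $N\ge0$. If $\alpha>pB(m)+(4n-3)P_m(p-1)$, then
\[
N\ \ge\ \frac{\alpha-(4n-3)P_m(p-1)}{p}\ >\ B(m),
\]
so Theorem~\ref{Thm:01} gives $\mathbf y$ with $f(\mathbf y)=N$. Combining the pieces, $\alpha = pN+R$ is represented by $H$.

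The main obstacle is the bookkeeping rather than any deep step. We must check that the variables assigned to the pairs $(p\pm i)$ are disjoint from those used for $a,b$ and that all seven coefficients $p-3,\dots,p+3$ genuinely occur; both are ensured by $n+3\le p\le 2n-4$, with $p-3\ge n$ and $p+3\le 2n-1$. We must also confirm that Lemma~\ref{Lem:02} applies: $p$ is odd and $p\nmid a,b$.
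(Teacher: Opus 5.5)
Your route is the paper's proof: pair $p\pm i$ so the seven consecutive coefficients realize $p\qf{1,2,2,2}_m$, use Lemma~\ref{Lem:02} on two coefficients prime to $p$ to make $\alpha-R$ divisible by $p$, bound $R$ by $(4n-3)P_m(p-1)$, and finish with Theorem~\ref{Thm:01}. There is one false step, however.

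\textbf{The choice $a=n$, $b=n+1$.} You justify it by ``$n+1<p-3$'', which holds only for $p\ge n+5$. The hypothesis also allows $p=n+3$ and $p=n+4$.
\begin{itemize}
\item For $n=9$ the only admissible prime is $p=13$, and then $n+1=10=p-3$.
\item For $n=10$, the prime $p=13$ gives $p-3=n$ and $p-2=n+1$.
\end{itemize}
In these cases the variable attached to $a$ or $b$ is also one of the paired variables $y_i$. The two pieces of the representation then cannot be combined. Your closing claim that disjointness is ``ensured by $n+3\le p\le 2n-4$'' is therefore wrong for this particular choice.

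\textbf{Repair.} Take $a,b$ to be any two of the $n-7\ge 2$ coefficients outside $\{p-3,\dots,p+3\}$; this is what the paper does, and your remark about slack in the bound anticipates it. You already showed these coefficients are prime to $p$. Any two distinct coefficients of $H$ satisfy $a+b\le (2n-1)+(2n-2)=4n-3$, so the rest of your argument goes through unchanged.
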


\begin{proof}
Let $\alpha$ be a positive integer satisfying
    \[
    \alpha >  p\, B(m)  + (4n-3) P_m (p-1).
    \]
Since there always exists an odd prime $p$ such that $n+3 \leq p \leq2n-4$ (see Remark \ref{Rmk:03}), we may rearrange the coefficients of $H$ as
    \[
    H = \qf{p-3, p-2, p-1, p, p+1, p+2, p+3, a_1, a_2, \ldots, a_{n-7}}_m.
    \]
Since $p > n$ and $2p > 2n-1$, the only multiple of $p$ among $n,n+1,\ldots,2n-1$ is $p$ itself and hence $p \nmid a_1 a_2 \cdots a_{n-7}$.
By Lemma \ref{Lem:02}, there exist integers $x_1, x_2$ such that $0 \leq x_1, x_2 \leq p-1$ and
    \[
    \alpha - (a_1 P_m(x_1) + a_2 P_m(x_2)) \equiv 0 \pmod p.
    \]
Since $P_m(x)$ is increasing for nonnegative integers $x$, $P_m(x_i) \leq P_m(p-1)$ and since $a_1 + a_2 \leq (2n-2) + (2n-1) = 4n-3$, it follows that
    \[
    \alpha - (a_1 P_m(x_1) + a_2 P_m(x_2)) > p\,B(m).
    \]
Thus, there exists an integer $\beta > B(m)$ such that
    \[
    \alpha - (a_1 P_m(x_1) + a_2 P_m(x_2)) = p\,\beta.
    \]
By Theorem \ref{Thm:01}, $\beta$ is represented by the $m$-gonal form $\qf{1, 2 ,2, 2}_m$, meaning that
    \[
    \beta = P_m(y_1) + 2P_m(y_2) + 2P_m(y_3) + 2P_m(y_4)
    \]
for some integers $y_1, y_2, y_3, y_4$.
Therefore, we obtain
    \begin{align*}
    \alpha
    = & \, p \,\beta + a_1P_m(x_1) + a_2P_m(x_2)\\
    = & \, p \,[ P_m(y_1) + 2P_m(y_2) + 2P_m(y_3) + 2P_m(y_4) ] + a_1P_m(x_1) + a_2P_m(x_2)\\
    = & \, (p-3)P_m(y_4) + (p-2)P_m(y_3) + (p-1)P_m(y_2) + pP_m(y_1)  \\
       & + (p+1)P_m(y_2) + (p+2)P_m(y_3) + (p+3)P_m(y_4) + a_1P_m(x_1) + a_2P_m(x_2).
    \end{align*}
This shows that $\alpha$ is represented by the $m$-gonal form $H(\mathbf{x}) = \qf{n, n+1, \ldots , 2n-1}_m$.
\end{proof}

\smallskip

\begin{Thm}\label{Thm:03}
For $8 \leq n \leq 26$, the $m$-gonal forms
    \[
    F = \qf{n, n, n+1, \ldots, 2n-1}_m \quad \text{and} \quad G = \qf{n, n+1, n+2, \ldots, 2n}_m
    \]
represent all positive integers $\alpha$ greater than $\calC[m,n]$, where $\calC[m,n]$ is given in Table \ref{Tbl:Bound C[m,n]}.
    \begin{table}[h!]
    \centering
    \renewcommand{\arraystretch}{1.2}
    \begin{small}
    \caption{An upper bound $\calC[m,n]$ for $8 \leq n \leq 26$}\label{Tbl:Bound C[m,n]}
    \begin{tabular}{c|l|l}    \hline
                 & \rm{\hspace{4ex} $m$-gonal  forms} & \rm{\hspace{7ex} conditions on $\calC[m,n]$} \\ \hline
    \multirow{2}{*}{$n=8$}
                  & {$\qf{8, 8, 9, \ldots, 15}_m$}   & $11B(m) + 23P_m(10)$\\ 
                  &{$\qf{8, 9, \ldots, 15, 16}_m$} & $11B(m) + 31P_m(10)$\\ \hline
    \multirow{2}{*}{$9 \leq n \leq 26$}
                 & $\qf{n, n, n+1, \ldots, 2n-1}_m$                     & $(2n-4) B(m)  + (4n-3) P_m (2n-5)$ \\
                 & $\qf{n, n+1, \ldots, 2n-1, 2n}_m$                   & $(2n-4) B(m)  + (4n-3) P_m (2n-5)$\\ \hline
    \end{tabular}
    \end{small}
    \end{table}
\end{Thm}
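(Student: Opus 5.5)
The plan is to reduce everything to Theorem \ref{Thm:02}, which handles the subform $H=\qf{n,n+1,\ldots,2n-1}_m$ using a prime $p$. Both $F$ and $G$ contain $H$ as a subform: $F=H\oplus\qf{n}_m$ and $G=H\oplus\qf{2n}_m$. Since $P_m(0)=0$, setting the extra variable to zero shows that every integer represented by $H$ is also represented by $F$ and by $G$. So it suffices to show that $H$ represents every $\alpha>\calC[m,n]$, or, when $H$ cannot be used directly, to rerun the proof of Theorem \ref{Thm:02} with the extra coefficient included.

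\textbf{Case $9\le n\le 26$.} By Remark \ref{Rmk:03} there is a prime $p$ with $n+3\le p\le 2n-4$. Theorem \ref{Thm:02} then says that $H$ represents every $\alpha>pB(m)+(4n-3)P_m(p-1)$. Two monotonicity facts bound this threshold:
\begin{itemize}
\item $B(m)>0$ for $m\ge 3$, since $8m^3-34m^2+68m-64>0$ there.
\item $P_m(x)$ is increasing for $x\ge0$.
\end{itemize}
Hence $pB(m)+(4n-3)P_m(p-1)\le (2n-4)B(m)+(4n-3)P_m(2n-5)=\calC[m,n]$. Therefore every $\alpha>\calC[m,n]$ is represented by $H$, and so by $F$ and $G$. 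I would check explicitly that $B(m)>0$, since this is what makes the bound uniform in $p$.

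\textbf{Case $n=8$.} Remark \ref{Rmk:03} does not apply here; in fact no prime $p$ with $11\le p\le 12$ and $p+3\le 15$ exists, because $p=11$ gives $p+3=14$. Use $p=11$. The seven consecutive coefficients $8,9,\ldots,14$ are exactly $p-3,\ldots,p+3$. The remaining coefficients, not divisible by $11$, are:
\begin{itemize}
\item for $F=\qf{8,8,9,\ldots,15}_m$: the extra $8$ and $15$;
\item for $G=\qf{8,\ldots,16}_m$: $15$ and $16$.
\end{itemize}
The argument then mirrors the proof of Theorem \ref{Thm:02}:
\begin{enumerate}
\item Apply Lemma \ref{Lem:02} with $a_1,a_2$ equal to these two coefficients to get $0\le x_1,x_2\le 10$ with $\alpha-a_1P_m(x_1)-a_2P_m(x_2)\equiv0\pmod{11}$.
\item The subtracted amount is at most $(a_1+a_2)P_m(10)$, namely $23P_m(10)$ for $F$ and $31P_m(10)$ for $G$. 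So for $\alpha>11B(m)+(a_1+a_2)P_m(10)$ the quotient $\beta$ satisfies $\beta>B(m)$.
\item Theorem \ref{Thm:01} gives $\beta=P_m(y_1)+2P_m(y_2)+2P_m(y_3)+2P_m(y_4)$.
\item Distribute $11\beta$ over the coefficients $8,\ldots,14$ using the pairing $(p-j)+(p+j)=2p$, exactly as in the proof of Theorem \ref{Thm:02}.
\end{enumerate}

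The main obstacle is bookkeeping rather than any new idea. One must confirm that Lemma \ref{Lem:02} applies, i.e. $11\nmid a_1a_2$ and $11$ is odd. One must also confirm that the pairing uses only coefficients actually present in each form, and that the table bounds match $a_1+a_2$: $8+15=23$ and $15+16=31$.
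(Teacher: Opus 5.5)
Your proposal is correct and is essentially the paper's own proof. For $9\le n\le 26$ you use Remark~\ref{Rmk:03}, Theorem~\ref{Thm:02} and monotonicity; your explicit check that $B(m)>0$ fills in a step the paper leaves implicit. For $n=8$ you run the $p=11$ argument with the leftover coefficients $8,15$ for $F$ and $15,16$ for $G$, exactly as the paper does.

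One side remark is wrong: a suitable prime does exist for $n=8$, namely $p=11$, since $11\le p\le 12$ and $p+3=14\le 15$. The actual reason Theorem~\ref{Thm:02} does not cover $n=8$ is that $H=\qf{8,\ldots,15}_m$ has only one coefficient outside $\{8,\ldots,14\}$, whereas Lemma~\ref{Lem:02} needs two.
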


\begin{Rmk}\label{Rmk:04}
The bounds in Table~\ref{Tbl:Bound C[m,n]} can be written explicitly as
    \begin{small}
    \begin{align*}
    11B(m)+23P_m(10) &= \tfrac{1}{7} \left(88m^3 - 374m^2 + 7993m - 13584 \right),\\
    11B(m)+31P_m(10) &= \tfrac{1}{7} \left(88m^3 - 374m^2 + 10513m - 18064 \right)
    \end{align*}
and, for $9 \leq n \leq26$,
    \begin{align*}
    (2n-4)B(m)+(4n-3)P_m(2n-5) &= \tfrac{16}{7}(n-2)m^3 - \tfrac{68}{7}(n-2)m^2 \\
                                                      &\quad+ \left( \tfrac{136}{7}(n-2) + (4n-3)(2n-5)(n-3) \right)m\\
                                                      &\quad- \left( \tfrac{128}{7}(n-2) + (4n-3)(2n-5)(2n-7) \right).
    \end{align*}
    \end{small}
\end{Rmk}

\begin{proof}
Let $\alpha > \calC[m,n]$. We consider three cases according to the value of $n$.

\smallskip

\Case{1)} $n=8$ and $F = \qf{8, 8, 9, \ldots, 15}_m$:
By Lemma \ref{Lem:02}, there exist integers $x_0, x_8$ such that $0 \leq x_0, x_8 \leq 10$ and $8P_m(x_0)+15P_m(x_8) \equiv \alpha \pmod{11}$.
Since $\alpha - (8P_m(x_0) + 15P_m(x_8)) \equiv 0 \pmod {11}$ and
    \[
    \alpha - (8P_m(x_0) + 15P_m(x_8)) \geq \alpha - 23 P_m(10) > 11B(m),
    \]
we can write $\alpha - (8P_m(x_0) + 15P_m(x_8)) = 11 \beta$ for some integer $\beta > B(m)$.
By Theorem \ref{Thm:01},  $\beta = P_m(y_1) + 2P_m(y_2) + 2P_m(y_3) + 2P_m(y_4)$ for some integers $y_1, y_2, y_3, y_4$ and hence
    \begin{eqnarray*}
    \alpha&=&11 [ P_m(y_1)+2P_m(y_2)+2P_m(y_3)+2P_m(y_4) ] + 8P_m(x_0) + 15P_m(x_8) \\
              &=&8P_m(x_0) + 8P_m(y_4) + 9P_m(y_3) + 10P_m(y_2) + 11P_m(y_1)\\
              &  &\hspace{20ex} +12P_m(y_2)+13P_m(y_3)+14P_m(y_4)+15P_m(x_8),
    \end{eqnarray*}
showing that $\alpha$ is  represented by $F = \qf{8, 8, 9, \ldots,15}_m$.

\smallskip

\Case{2)} $n=8$ and $G = \qf{8, 9, \ldots, 15, 16}_m$:
Similarly, Lemma \ref{Lem:02} ensures the existence of integers $x_7, x_8$ such that $0 \leq x_7, x_8 \leq 10$ and $15P_m(x_7)+16P_m(x_8) \equiv \alpha \pmod{11}$.
Since $\alpha - (15P_m(x_7) + 16P_m(x_8)) \equiv 0 \pmod {11}$ and
    \[
    \alpha - (15P_m(x_7) + 16P_m(x_8)) \geq \alpha - 31 P_m(10) > 11B(m),
    \]
we apply the same decomposition as in Case 1) to conclude that $\alpha$ is  represented by \text{$G = \qf{8, 9, \ldots,15, 16}_m$.}

\smallskip

\Case{3)} $9 \leq n \leq 26$ and  $F = \qf{n, n, n+1, \ldots, 2n-1}_m$ and $G = \qf{n, n+1, \ldots, 2n-1, 2n}_m$:
From Remark \ref{Rmk:03}, there always exists an odd prime $p$ with $n+3 \leq p \leq 2n-4$ for all $n \geq 9$.
Since $P_m(x)$ is increasing for nonnegative integers $x$, we have
    \begin{align*}
    \calC[m,n]  & =  (2n-4) B(m)  + (4n-3) P_m (2n-5) \\
                       & \geq  p\,B(m) + (4n-3) P_m (p-1).
     \end{align*}
Hence Theorem~\ref{Thm:02} shows that $H=\qf{n,n+1,\ldots,2n-1}_m$ represents every integer $\alpha > \calC[m,n]$.
Therefore we conclude  that  both $F$ and $G$ represent every positive integer $\alpha > \calC[m,n]$.
\end{proof}

\smallskip

A direct computation using Mathematica verifies that, for $\frac{n+5}{2} < m \leq \left\lfloor\frac{3n+1}{2}\right\rfloor$, both $F$ and $G$ represent every integer $\alpha$ satisfying $n \leq \alpha \leq \calC[m,n]$.
Combining this with Theorem~\ref{Thm:03}, we have the following theorem.

\smallskip

\begin{Thm}\label{Thm:04}%
Let $8\leq n\leq26$ and $\frac{n+5}{2} < m \leq \left\lfloor\frac{3n+1}{2}\right\rfloor$.
Then the $m$-gonal forms
    \[
    F=\qf{n, n, n+1, \ldots, 2n-1}_m \text{ and } G=\qf{n, n+1, n+2,  \ldots, 2n}_m
    \]
are tight $\calT(n)$-universal.
\end{Thm}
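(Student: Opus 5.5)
Tight $\calT(n)$-universality has two parts. First, $F$ and $G$ represent no positive integer less than $n$. Second, they represent every integer $\alpha\geq n$. I will check the two parts separately.

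For the first part, note that for $m\geq3$ the generalized $m$-gonal numbers are all nonnegative. Moreover, $P_m(x)=0$ only when $x=0$, and $P_m(x)\geq1$ for every other integer $x$; for example $P_m(1)=1$ and $P_m(-1)=m-3\geq1$ once $m\geq4$. For $m=3$ one has $P_3(-1)=0$, but the value is still nonnegative. Every coefficient of $F$ and $G$ is at least $n$. So any positive value of $F$ or $G$ has at least one nonzero summand $a_iP_m(x_i)\geq n$. Hence neither form represents any integer in $\{1,\ldots,n-1\}$.

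For the second part, I split $\calT(n)$ at the bound $\calC[m,n]$ of Table~\ref{Tbl:Bound C[m,n]}. Theorem~\ref{Thm:03} already gives that $F$ and $G$ represent every $\alpha>\calC[m,n]$. This uses Theorem~\ref{Thm:02}, Lemma~\ref{Lem:02} and Theorem~\ref{Thm:01}, through the decomposition $p\beta+a_1P_m(x_1)+a_2P_m(x_2)$. It remains to handle the finite interval $n\leq\alpha\leq\calC[m,n]$. Since $8\leq n\leq26$ and $\frac{n+5}{2}<m\leq\lfloor\frac{3n+1}{2}\rfloor$, there are only finitely many pairs $(m,n)$. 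For each pair, $\calC[m,n]$ is an explicit cubic in $m$ (Remark~\ref{Rmk:04}), and it is at most on the order of $10^7$.

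For each such pair I will run a computer search. The search enumerates the values $P_m(x)$ for integers $x$ with $P_m(x)\leq\calC[m,n]/n$; both signs of $x$ are needed because generalized numbers are used. It then builds the set of values of $F$ and $G$ up to $\calC[m,n]$ by dynamic programming, adding one variable at a time. Finally it checks that every integer in $[n,\calC[m,n]]$ appears. This is exactly the Mathematica verification stated just before the theorem. Combining it with Theorem~\ref{Thm:03} and the first part gives tight $\calT(n)$-universality of both $F$ and $G$.

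The main obstacle is this finite computation, not any theoretical step. The bounds grow like $\tfrac{16}{7}(n-2)m^3$, so for $n$ near $26$ and $m$ near $39$ the interval has size in the millions. The search must also run over all pairs in the green region. To keep it feasible, I would truncate each step of the sumset at $\calC[m,n]$. I would also exploit the many coefficients, which let small targets be represented early. If the full search were too heavy, I would fall back on a sharper tail argument. One option is to choose the prime $p$ as small as possible rather than using the worst-case value $2n-4$. Another is to use the finer bound $pB(m)+(a_1+a_2)P_m(p-1)$ directly, which shrinks the interval that has to be checked.
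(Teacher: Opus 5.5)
Your proposal is correct and follows the paper's proof exactly: Theorem~\ref{Thm:03} covers every $\alpha>\calC[m,n]$, and a finite machine check (the paper's Mathematica computation) covers $n\le\alpha\le\calC[m,n]$. Your explicit argument that no positive integer below $n$ is represented, using $P_m(x)\ge 0$ and the fact that every coefficient is at least $n$, is a useful addition, since the paper leaves that point implicit for this theorem.
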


\smallskip

\section{Tight $\calT(n)$-universality of $F$ and $G$ for $n \geq 27$}\label{Sec:FromT(27)}

In this section, we prove the tight $\calT(n)$-universality corresponding to the yellow region in Figure~\ref{fig:CS(m,n)}.
In fact, we prove the result that, for $n \geq 27$, the
$m$-gonal forms
    \[
    F = \qf{n, n, n+1, \ldots, 2n-1}_m \quad \text{and} \quad G = \qf{n, n+1, \ldots, 2n}_m
    \]
are tight $\calT(n)$-universal whenever $\displaystyle 3 \leq m \leq \lfloor \tfrac{3n+1}{2} \rfloor$.
The idea is to construct sufficiently long intervals of consecutive integers represented by suitable ``head'' and ``tail'' parts of $F$ and $G$, and then to show that these intervals overlap.

To facilitate the analysis of the $m$-gonal forms $F$ and $G$, we decompose each form into a ``head'' part and a ``tail'' part as follows:
    \begin{align*}
    F_h &= nP_m(x_1)+(n+1)P_m(x_2)+(n+2)P_m(x_3)+\cdots+(n+6)P_m(x_7),\\
    F_t &=  nP_m(x_0)+(n+7)P_m(x_8)+(n+8)P_m(x_9)+\cdots+(2n-1)P_m(x_n), \\
    G_h &= nP_m(x_0)+(n+1)P_m(x_1)+(n+2)P_m(x_2)+\cdots+(n+6)P_m(x_6),\\
    G_t &= (n+7)P_m(x_7)+(n+8)P_m(x_8)+(n+9)P_m(x_9)+\cdots+2nP_m(x_n).
    \end{align*}
With these notations, the $m$-gonal forms can be rewritten as:
    \begin{align*}
    F &= \underbrace{\qf{n, n+1, n+2, \ldots, n+6}_m}_{= F_h(x_1, x_2, x_3, \ldots, x_7)}
         +  \underbrace{\qf{n, n+7, n+8, \ldots, 2n-1}_m}_{= F_t(x_0,x_8,x_9, \ldots, x_n)}, \\
    G &= \underbrace{\qf{n, n+1, n+2, \ldots, n+6}_m}_{= G_h(x_0,x_1, x_2, x_3, \ldots, x_6)}
          + \underbrace{\qf{n+7, n+8, n+9, \ldots, 2n}_m}_{= G_t(x_7,x_8, x_9,\ldots, x_n)}.
    \end{align*}
To study the representability of integers by $F$ and $G$, we introduce the following sets of integers represented by the tail parts:
    \begin{align*}
    V_i  &=\{ F_t(x_0,x_8,x_9, \ldots, x_n) \,|\,  x_j=0, 1, \text{ and } x_0+\sum_{j=8}^{n} x_j = i \},   \\
    W_i &=\{ G_t(x_7,x_8, x_9,\ldots, x_n) \,|\, x_j=0,1, \text{ and } \sum_{j=7}^{n}x_j=i \}
    \end{align*}
for each integer $2 \leq i \leq n-8$, and define the unions
    \[
    V = \bigcup_{i=2}^{n-8} V_i  \quad \text { and } \quad W= \bigcup_{i=2}^{n-8} W_i.
    \]
The following Table \ref{Tbl:integers represented by F_t, G_t} illustrates the sets $V_i$ and $W_i$ respectively, for $2 \leq i \leq n-8$.
\begin{table}[!ht]\begin{small}
\caption{Sets of integers represented by $F_t$ and $G_t$}\label{Tbl:integers represented by F_t, G_t}
\begin{tabular}{c|l|c|l}
\hline \noalign{\smallskip}
\multicolumn{2}{l|}{\hspace{15ex}$F_t$} & \multicolumn{2}{l}{\hspace{16ex}$G_t$} \\
\noalign{\smallskip} \hline \noalign{\smallskip}
\multirow{6}{*}{$V_2$} & $2n+7 = F_t(1,1,0,0, \ldots, 0)$   & \multirow{6}{*}{$W_2$} & $2n+15 = G_t(1,1,0,0, \ldots, 0)$  \\
                                      & $2n+8 = F_t(1,0,1,0, \ldots, 0)$   &                                         & $2n+16 = G_t(1,0,1,0, \ldots, 0)$  \\
                                      & $2n+9 = F_t(1,0,0,1, \ldots, 0)$   &                                         & $2n+17 = G_t(1,0,0,1, \ldots, 0)$  \\
                                      & \hspace{8ex}$\vdots$                   &                                         & \hspace{9ex}$\vdots$   \\
                                      & $4n-4 = F_t(0,0,0, \ldots, 1,0,1)$  &                                        & \hspace{1ex}$4n-2 = G_t(0,0,0, \ldots, 1,0,1)$ \\
                                      & $4n-3 = F_t(0,0,0, \ldots, 0,1,1)$  &                                        & \hspace{1ex}$4n-1 = G_t(0,0,0, \ldots, 0,1,1)$ \\
\noalign{\smallskip} \hline \noalign{\smallskip}
\multirow{6}{*}{$V_3$} & $3n+15 = F_t(1,1,1,0,0, \ldots, 0)$ & \multirow{6}{*}{$W_3$} & $3n+24 = G_t(1,1,1,0,0, \ldots, 0)$  \\
                                      & $3n+16 = F_t(1,1,0,1,0, \ldots, 0)$ &                                         & $3n+25 = G_t(1,1,0,1,0, \ldots, 0)$  \\
                                      & $3n+17 = F_t(1,1,0,0,1, \ldots, 0)$ &                                         & $3n+26 = G_t(1,1,0,0,1, \ldots, 0)$  \\
                                      & \hspace{9ex}$\vdots$                      &                                         & \hspace{9ex}$\vdots$   \\
                                      &  \hspace{1ex}$6n-7 = F_t(0,0, \ldots, 1, 0,1,1)$  &                   & \hspace{1ex}$6n-4 = G_t(0,0, \ldots, 1, 0,1,1)$ \\
                                      &  \hspace{1ex}$6n-6 = F_t(0,0, \ldots, 0, 1,1,1)$  &                   & \hspace{1ex}$6n-3 = G_t(0,0, \ldots, 0, 1,1,1)$ \\
\noalign{\smallskip} \hline \noalign{\smallskip}
$\vdots$                       & \hspace{9ex}$\vdots$                       & $\vdots$                         & \hspace{9ex}$\vdots$   \\
\noalign{\smallskip} \hline \noalign{\smallskip}
\multirow{7}{*}{$V_{n-8}$} & $ \tfrac{1}{2}(3n^2-21n-36)$      & \multirow{7}{*}{$W_{n-8}$}  & $ \tfrac{1}{2}(3n^2-19n-40)$ \\
                                            & \hspace{9ex}$= F_t(1, 1, \ldots, 1, 0,0)$ &                                  & \hspace{9ex}$= G_t(1, 1, \ldots, 1, 0,0)$  \\
                                            & \hspace{9.5ex}$\vdots$              &                                                 & \hspace{9.5ex}$\vdots$   \\
                                            & $ \tfrac{1}{2}(3n^2-17n-58)$      &                                                 & $ \tfrac{1}{2}(3n^2-15n-74)$ \\
                                            & \hspace{9ex}$= F_t(0,1,0,1, \ldots, 1)$  &                                    & \hspace{9ex}$= G_t(0,1,0,1, \ldots, 1)$  \\
                                            & $ \tfrac{1}{2}(3n^2-17n-56)$      &                                                 & $ \tfrac{1}{2}(3n^2-15n-72)$ \\
                                            & \hspace{9ex}$= F_t(0,0,1,1, \ldots, 1)$  &                                    & \hspace{9ex}$= G_t(0,0,1,1, \ldots, 1)$  \\
\noalign{\smallskip} \hline \noalign{\smallskip}
\end{tabular}
\end{small}
\end{table}

\smallskip

\begin{Lem}\label{Lem:03}
Let $n \geq 27$. \\
\noindent \rm{(1)} The sets
        \begin{align*}
        V &= \bigcup_{i=2}^{n-8} V_i =\{ 2n+7, \, 2n+8, \, 2n+9, \, \ldots, \,  \tfrac{1}{2}(3n^2-17n-56) \},\\
        W &= \bigcup_{i=2}^{n-8} W_i = \{ 2n+15, \, 2n+16, \, 2n+17, \, \ldots, \,  \tfrac{1}{2}(3n^2-15n-72) \}
        \end{align*}
        are intervals of consecutive integers represented by the tail part $F_t$  and $G_t$ respectively.\\

\noindent \rm{(2)}  If an integer $\alpha$ is represented by the head part $F_h$ and $G_h$ respectively, then all the shifted intervals of consecutive integers
        \begin{align*}
        \alpha + V
        &= \{ \,\alpha + \beta \mid \beta \in V\,\}
          \,= \{ \alpha + 2n+7, \, \alpha + 2n+8, \,  \ldots, \,  \alpha + \tfrac{1}{2}(3n^2-17n-56) \}, \\
        \alpha + W
        &= \{ \,\alpha + \beta \mid \beta \in W\,\}
          = \{ \alpha + 2n+15, \, \alpha + 2n+16, \,  \ldots, \,  \alpha + \tfrac{1}{2}(3n^2-15n-72) \}
        \end{align*}
    are represented by the $m$-gonal forms $F$ and $G$ respectively.
\end{Lem}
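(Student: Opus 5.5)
The plan is to treat each $V_i$ (and $W_i$) as the set of subset sums of exactly $i$ elements drawn from a fixed multiset of coefficients, show that each $V_i$ is an interval, and then show that consecutive intervals overlap or abut. The key observation is that with $x_j\in\{0,1\}$ we have $P_m(x_j)\in\{0,1\}$, since $P_m(0)=0$ and $P_m(1)=1$. Hence $F_t$ takes the value $\sum_{j\in S} c_j$, where $S$ is a set of $i$ indices. For $F_t$ the coefficient multiset is $\{n, n+7, n+8,\ldots,2n-1\}$, which is an interval $\{n+7,\ldots,2n-1\}$ of $n-7$ consecutive integers together with one extra element $n$. For $G_t$ it is the interval $\{n+7,\ldots,2n\}$ of $n-6$ consecutive integers.

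For $G_t$, I use the standard fact that the sums of $i$ distinct elements from $\{a,a+1,\ldots,b\}$ fill every integer between the minimum and the maximum. Starting from the $i$ smallest elements, one repeatedly raises a single element by $1$ (the largest one that is not blocked) until the $i$ largest are reached. Each step changes the sum by exactly $1$. Thus
\[
W_i=\Bigl[\,i(n+7)+\tbinom{i}{2},\; i\cdot 2n-\tbinom{i}{2}\,\Bigr].
\]
For $F_t$, $V_i$ is the union of two such intervals. One consists of sums of $i$ elements from $\{n+7,\ldots,2n-1\}$. The other is $n$ plus sums of $i-1$ elements from the same range. I check that these two overlap: the maximum of the second, $n+(i-1)(2n-1)-\binom{i-1}{2}$, is at least the minimum of the first minus $1$, namely $i(n+7)+\binom{i}{2}-1$. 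This should reduce to a quadratic inequality in $i$ that holds for $2\le i\le n-8$ once $n\ge 27$. Hence $V_i$ is also an interval, running from $n+(i-1)(n+7)+\binom{i-1}{2}$ to $i(2n-1)-\binom{i}{2}$. For $i=2$ this gives $2n+7$ up to $4n-3$, and for $i=n-8$ it gives the table's endpoints, which provides a consistency check.

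Next I show the union over $i$ is a single interval. I verify that $\max V_i+1\ge \min V_{i+1}$ for all $2\le i\le n-9$, and similarly for $W_i$. The difference between the two sides is a quadratic in $i$ with leading coefficient $-1$ (roughly $n - 8 - 2i$ type terms appear), so it is concave. It therefore suffices to check the endpoints $i=2$ and $i=n-9$. I expect this to be the main obstacle. Near the top, the sets of $i$ elements become crowded and the growth of the minimum, about $n+7+i$ per step, competes with the maximum range width, about $i(n-8-i)$. The condition $n\ge 27$ and the restriction $i\le n-8$ are exactly what make the endpoint checks go through. At $i=n-9$ the width is about $n-9$ while the gap requirement is about $2n$, so I need to be careful; if this fails for the largest $i$, I would instead use the fact that $V_{n-8}$ starts low enough to overlap $V_{n-9}$ directly, computing explicitly from the table values. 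Once this is done, the union of the $V_i$ equals $[2n+7,\tfrac12(3n^2-17n-56)]$, and the union of the $W_i$ equals $[2n+15,\tfrac12(3n^2-15n-72)]$.

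Part (2) is then immediate. $F=F_h+F_t$ in disjoint variables. If $\alpha=F_h(x_1,\ldots,x_7)$ and $\beta=F_t(x_0,x_8,\ldots,x_n)\in V$, then combining the two variable assignments gives $F=\alpha+\beta$. The same argument works for $G=G_h+G_t$ with $W$.
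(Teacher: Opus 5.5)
Your proposal is correct and follows essentially the same route as the paper: each $V_i$ and $W_i$ is an interval, and consecutive blocks abut since $\max V_i-\min V_{i+1}=-i^2+(n-7)i-n$. You also justify the interval property of $V_i$, which the paper only calls ``readily checked'': the overlap condition for the extra coefficient $n$ is $-i^2+(n-6)i-n+1\ge n-15>0$. Your worry at $i=n-9$ comes from an arithmetic slip, and no fallback is needed. The width of $W_i$ is $i(n-6-i)$ and that of $V_i$ is $i(n-6-i)+6$, so both are roughly $3n$ there rather than $n-9$. Moreover, the concave difference equals $n-18$ for $V$ (respectively $n-25$ for $W$) at both endpoints $i=2$ and $i=n-9$.
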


\begin{proof}

\noindent \rm{(1)}
    For all $2 \leq i \leq n-8$, it is readily checked that $V_i$ and $W_i$ consist of consecutive integers represented by the tail part $F_t$ and $G_t$  respectively, since $P_m(1) = 1$.
    The only possible gaps could occur at the transition from $V_i$ to $V_{i+1}$ and  $W_i$ to $W_{i+1}$.
    For each $2 \leq i \leq n-9$, the last terms of $V_i$ and $W_i$ are
        \begin{align*}
        F_t(0, \ldots, 0, \underbrace{1, \ldots, 1}_{i \text{ terms }})
        &=\sum_{k=0}^{i-1} (2n-i+k) = \tfrac{1}{2}(4ni-i^2-i),\\
        G_t(0, \ldots, 0, \underbrace{1, \ldots, 1}_{i \text{ terms }})
        &=\sum_{k=1}^{i} (2n-i+k) = \tfrac{1}{2}(4ni-i^2+i)
        \end{align*}
    and the initial terms of $V_{i+1}$ and $W_{i+1}$ are
        \begin{align*}
        F_t(\hspace{-1ex}\underbrace{1, \ldots, 1}_{ (i+1) \text{ terms }}\hspace{-1ex}, 0, \ldots, 0)
        &= n+  \sum_{k=1}^{i} (n+6+k) = \tfrac{1}{2}(2n(i+1) + i^2 + 13i),\\
        G_t(\hspace{-1ex}\underbrace{1, \ldots, 1}_{ (i+1) \text{ terms }}\hspace{-1ex}, 0, \ldots, 0)
        &= \sum_{k=1}^{i+1} (n+6+k) = \tfrac{1}{2}(2n(i+1) +i^2 +15i +14)
        \end{align*}
    respectively.
    Since $n \geq 27$, by comparing the last element of $V_i$ with the first element of $V_{i+1}$ explicitly, the difference between the last integer in $V_i$ and the first integer in $V_{i+1}$ is
        \begin{align*}
           & F_t(0, \ldots, 0, \underbrace{1, \ldots, 1}_{i \text{ terms }}) -  F_t(\hspace{-1ex}\underbrace{1, \ldots, 1}_{ (i+1) \text{ terms }}\hspace{-1ex}, 0, \ldots, 0) \\
        = & -i^2 +ni -7i -n
        = -(i-(n-9))(i-2) + n-18 > 0
        \end{align*}
    for all $2 \leq i \leq n-9$. Hence no gaps occur in the union.
    With the same calculation, the difference of  last integer of $W_i$ and the first integer of $W_{i+1}$ is positive.
    Hence  the unions $\displaystyle V = \bigcup_{i=2}^{n-8} V_i$  and $\displaystyle W= \bigcup_{i=2}^{n-8} W_i$  are  consecutive integers represented by the tail part $F_t$ and $G_t$ respectively.\\

\noindent \rm{(2)}
    Since $F = F_h + F_t$ and $G=G_h+G_t$,  adding any $\alpha$ from the head part to all elements of $V$ (resp. W) produces consecutive intervals $\alpha + V$ (resp. $\alpha + W$) represented by $F$ (resp. $G$).
\end{proof}

\smallskip

For convenience in presenting Lemma~\ref{Lem:04}, we note that
    \[
    P_m(1)=1, \quad P_m(-1)=m-3, \quad P_m(2)=m, \quad P_m(-2)=3m-8
    \]
and let
    \[
    A_q = F_h(v_q), \quad B_q = F_h(w_q)
    \]
where the vectors $v_q$ and $w_q$ are given in Table~\ref{Tbl:Some integers represented by F_h and G_h} for all $q = 1, 2, \ldots, 10$.
Since $F_h$ and $G_h$ have the same coefficients, we also have
    \[
    A_q=G_h(v_q), \quad  B_q=G_h(w_q)
    \]
for all $q = 1, 2, \ldots, 10$.
\begin{table}[h!]
\begin{small}
\caption{Some integers represented by $F_h$ and $G_h$}\label{Tbl:Some integers represented by F_h and G_h}
\begin{tabular}{p{1ex}|p{17ex}|p{28.5ex}|p{18.5ex}|p{15ex}}
\hline \noalign{\smallskip}
$q$   & \hspace{2ex}$A_q = F_h(v_q)$  & \hspace{12ex}$v_q$  & \hspace{2ex}$B_q = F_h(w_q)$  &  \hspace{6ex}$w_q$\\
 \noalign{\smallskip}\hline \noalign{\smallskip}
$1$   & $n(m-3)$                &$(-1,0,0,0,0,0,0)$        &$(n+6)m+6n+15$   &$(1,1,1,1,1,1,2)$\\
$2$   & $(2n+1)(m-3)$       &$(-1,-1,0,0,0,0,0)$       &$(2n+11)m+2n+7$ &$(0,0,0,1,1,2,2)$\\
$3$   & $3(n+3)(m-3)$       &$(0,0,-1,-1,-1,0,0)$      &$(3n+15)m$           &$(0,0,0,0,2,2,2)$\\
$4$   & $4(n+3)(m-3)$       &$(0,-1,-1,0,-1,-1,0)$     &$(n+3)(4m+3)$      &$(1,2,2,1,2,2,1)$\\
$5$   & $5(n+3)(m-3)$       &$(0,-1,-1,-1,-1,-1,0)$    &$(n+3)(5m+2)$      &$(1,2,2,2,2,2,1)$\\
$6$   & $6(n+3)(m-3)$       &$(-1,-1,-1,0,-1,-1,-1)$  &$(n+3)(6m+1)$      &$(2,2,2,1,2,2,2)$\\
$7$   & $7(n+3)(m-3)$       &$(-1,-1,-1,-1,-1,-1,-1)$ &$(n+3)(7m)$          &$(2,2,2,2,2,2,2)$\\
$8$   & $(n+3)(8m-22)$     &$(0,-2,-1,0,-1,-2,0)$     &$(n+3)(8m-3)$       &$(1,2,3,1,3,2,1)$\\
$9$   & $(n+3)(9m-26)$     &$(-1,-1,-1,-2,-1,-1,-1)$ &$(n+3)(9m-6)$      &$(0,3,2,2,2,3,0)$\\
$\hspace{-0.5ex}10$ & $(n+3)(10m-28)$  &$(-1,-1,-2,0,-2,-1,-1)$   &$(n+3)(10m-6)$    &$(3,2,2,0,2,2,3)$\\
 \noalign{\smallskip}\hline \noalign{\smallskip}
\end{tabular}
\end{small}
\end{table}

\smallskip

\begin{Lem}\label{Lem:04}
Let $n \geq 27$ and $\displaystyle 3 \leq m \leq \lfloor \tfrac{3n+1}{2} \rfloor$.
Then the $m$-gonal forms $F$ and $G$ represent all intervals of consecutive integers $\displaystyle \bigcup_{q=1}^{10} I_q $ and $\displaystyle \bigcup_{q=1}^{10} J_q$, respectively, where
    \[
    I_q = (A_q + V) \cup (B_q + V) \quad \text{ and } \quad J_q = (A_q + W) \cup (B_q + W)
    \]
for $q= 1,2, \ldots, 10$.
\end{Lem}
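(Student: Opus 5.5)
The plan is to show three things: each $A_q$ and $B_q$ is represented by the head parts; each $I_q$ (resp. $J_q$) is a genuine interval; and consecutive intervals $I_q, I_{q+1}$ (resp. $J_q,J_{q+1}$) overlap or abut, so that $\bigcup_{q=1}^{10} I_q$ and $\bigcup_{q=1}^{10} J_q$ are single blocks of consecutive integers. Representation then follows at once from Lemma~\ref{Lem:03}(2).

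\textbf{Step 1: head values.} Verify the entries of Table~\ref{Tbl:Some integers represented by F_h and G_h} by direct substitution into $F_h=\qf{n,n+1,\dots,n+6}_m$, using $P_m(1)=1$, $P_m(-1)=m-3$, $P_m(2)=m$, $P_m(-2)=3m-8$, and $P_m(3)=3m-3$ for the $w_q$ with entries $3$. For example,
\[
A_3=(n+2+n+3+n+4)(m-3)=3(n+3)(m-3).
\]
Since $G_h$ has the same coefficients, the same $A_q,B_q$ are represented by $G_h$. By Lemma~\ref{Lem:03}(2), the sets $A_q+V$, $B_q+V$ are represented by $F$, and $A_q+W$, $B_q+W$ are represented by $G$.

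\textbf{Step 2: each $I_q$, $J_q$ is an interval.} It suffices to check $0\le B_q-A_q\le |V|$, where $|V|=\tfrac12(3n^2-17n-56)-(2n+7)+1$. The analogous condition for $W$ uses $|W|=\tfrac12(3n^2-15n-72)-(2n+15)+1$. The differences are linear in $m$ and $n$:
\[
B_1-A_1=6m+9n+15,\qquad B_2-A_2=10m+8n+10,\qquad B_3-A_3=6m+9(n+3),
\]
and for $4\le q\le10$ the difference is a small constant multiple of $n+3$ (at most $22(n+3)$). Using $m\le\frac{3n+1}{2}$, every difference is $O(n)$, while $|V|,|W|\sim\frac32n^2$. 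Hence $n\ge 27$ gives the inequalities with room to spare.

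\textbf{Step 3: gluing consecutive $q$.} This is the main obstacle, because here the upper bound on $m$ is essential. We need
\[
A_{q+1}+\min V\le B_q+\max V+1,\qquad\text{i.e.}\qquad A_{q+1}-B_q\le |V|,
\]
and the same with $W$. The differences are now of size roughly $(n+3)m$, comparable to $\frac32 n^2$. For instance,
\[
A_2-B_1=(n-5)m-12n-18,\qquad A_4-B_3=(n-3)m-12(n+3),\qquad A_8-B_7=(n+3)(m-22).
\]
Each has the form $c_1(n)\,m-c_2(n)$ with $c_1(n)\le n+3$. It is therefore increasing in $m$, so it is enough to substitute $m=\frac{3n+1}{2}$. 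For $q=7$ this gives $\tfrac12(3n^2-34n-129)$ against $|V|=\tfrac12(3n^2-21n-68)$, which holds. I would tabulate all nine gaps $A_{q+1}-B_q$ for $q=1,\dots,9$ in this way and compare each quadratic in $n$ with $|V|$ and $|W|$. The $W$ case is the tighter one, since $|W|$ is smaller by about $4n$. I would then confirm each resulting inequality for $n\ge27$ by checking the sign of a linear or quadratic polynomial in $n$. If $m$ is small, some $A_{q+1}$ may even lie below $B_q$, but the intervals then overlap and the gluing is trivially satisfied.

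Combining Steps 1–3, $\bigcup_{q=1}^{10} I_q$ and $\bigcup_{q=1}^{10} J_q$ are intervals of consecutive integers represented by $F$ and $G$ respectively.
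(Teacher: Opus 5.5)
Your proposal is correct and follows essentially the same route as the paper: verify the head values $A_q,B_q$ in the table, then check that $A_q+V$ meets $B_q+V$ and that $B_q+V$ meets $A_{q+1}+V$ (likewise for $W$), using $m\le\frac{3n+1}{2}$ and $n\ge 27$ for the second gluing. Two small corrections, neither of which affects the argument. First, $|W|-|V|=n-16>0$, so $V$ (that is, $F$) is actually the tighter case, not $W$. Second, when $A_{q+1}<B_q$ the gluing is not automatic: you still need $B_q-A_{q+1}\le|V|$, which holds because this difference is at most $20(n+3)$.
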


\begin{proof}
We prove that $\displaystyle \bigcup_{q=1}^{10} I_q$  is a union of intervals of consecutive integers represented by the $m$-gonal form $F$  through the following steps:

\Step{1)} By Lemma \ref{Lem:03}, the union
    \[
    V = \bigcup_{i=2}^{n-8} V_i
       = \left\{ ~2n+7, \quad 2n+8, \quad 2n+9, \quad \ldots, \quad  \tfrac{1}{2}(3n^2 - 17n - 56)~ \right\}
    \]
is an interval of consecutive integers represented by $F_t$.

\Step{2)} For each $q = 1, 2, \ldots, 10$, both $A_q = F_h(v_q)$ and $B_q = F_h(w_q)$ are integers represented by the head part $F_h$.
By Lemma \ref{Lem:03},  the shifted sets
    \begin{align*}
    A_q + V  &= \{ ~A_q + \beta ~\mid~ \beta \in V~ \}\\
                    &= \left\{ ~A_q+2n+7, \quad A_q+2n+8, \quad \ldots, \quad   A_q+\tfrac{1}{2}(3n^2 - 17n - 56)~ \right\},\\
    B_q + V  &= \{ ~B_q + \beta ~\mid~ \beta \in V~ \}\\
                    &= \left\{ ~B_q+2n+7, \quad B_q+2n+8, \quad \ldots, \quad   B_q+\tfrac{1}{2}(3n^2 - 17n - 56)~ \right\}
    \end{align*}
are represented by $F$ and each forms an interval of consecutive integers.

\Step{3)} We show that the last integer in $A_q + V$ and the first integer in $B_q + V$ overlap or are consecutive, i.e.,
    \[
    \bigl[ A_q + \tfrac{1}{2}(3n^2 - 17n - 56) \bigr] - \bigl[ B_q + 2n + 7 \bigr] \geq 0.
    \]
For $q=1$, we have
    \begin{align*}
    &\bigl[ A _1+ \tfrac{1}{2}(3n^2-17n-56) \bigr] - \bigl[ B_1+2n+7 \bigr]\\
    &= \bigl[ n(m-3)+\tfrac{1}{2}(3n^2-17n-56) \bigr] - \bigl[  (n+6)m+6n+15+(2n+7) \bigr] \\
    &= \tfrac{1}{2}(3n^2 -39n -100) -6m \geq \tfrac{1}{2}(3n^2 -39n -100) - 6 \cdot \tfrac{1}{2}(3n+1)\\
    &= \tfrac{1}{2}(3n^2-57n -106) \geq 0 \quad \text{ as } \quad n \geq 27.
    \end{align*}
Similar computations hold for $q = 2, 3, \ldots, 10$.
Hence, for each $q=1, 2, \ldots, 10$, the set
    \[
    I_q = (A_q + V) \cup (B_q + V)
    \]
forms an interval of consecutive integers.

\Step{4)} We verify that for $q = 1, 2, \ldots, 9$, the last integer in $B_q + V$ and the first integer in $A_{q+1} + V$ overlap or are consecutive, i.e.,
    \[
    \bigl[ B_q + \tfrac{1}{2}(3n^2 - 17n - 56) \bigr] - \bigl[ A_{q+1} + 2n + 7 \bigr] \geq 0.
    \]
For $q=1$, this difference is
    \begin{align*}
    &\bigl[ B_1+ \tfrac{1}{2}(3n^2-17n-56) \bigr] - \bigl[ A_2+2n+7 \bigr]\\
    &= \bigl[ (n+6)m +6n+15+\tfrac{1}{2}(3n^2-17n-56) \bigr] - \bigl[ (2n+1)(m-3)+(2n+7) \bigr]\\
    &= \tfrac{1}{2}(3n^2 +3n -34) -(n-5)m  \geq \frac{1}{2}(3n^2 +3n -34) -(n-5)\cdot \tfrac{1}{2}(3n+1)\\
    &= \tfrac{1}{2}(17n-29) \geq 0 \quad \text{ as } \quad n \geq 27.
    \end{align*}
Similar arguments apply for all $q=2, 3, \ldots, 9$.

\Step{5)} Combining these results, we conclude that the union
    \begin{align*}
    \bigcup_{q=1}^{10} I_q
    &= \bigcup_{q=1}^{10} (A_q + V \cup B_q + V)\\
    &= \left\{
         \begin{array}{l}
         n(m-3)+2n+7, \quad  n(m-3)+2n+8,  \\
         \hspace{15ex} \ldots, \quad  (n+3)(10m-6)+\frac{1}{2}(3n^2-17n-56)
         \end{array}
         \right\}
    \end{align*}
is an interval of consecutive integers starting from $n(m-3) + 2n + 7$ up to $(n+3)(10m - 6) + \tfrac{1}{2}(3n^2 - 17n - 56)$ represented by the $m$-gonal form $F$.

By similar reasoning, the union
    \begin{align*}
    \bigcup_{q=1}^{10} J_q
    &= \bigcup_{q=1}^{10} (A_q + W \cup B_q + W)\\
    &= \left\{
         \begin{array}{l}
         n(m-3)+(2n+15), \quad  n(m-3)+(2n+16),  \\
         \hspace{15ex} \ldots, \quad  (n+3)(10m-6)+\frac{1}{2}(3n^2-15n-72)
         \end{array}
         \right\}
    \end{align*}
is also an interval of consecutive integers represented by the $m$-gonal form $G$.
\end{proof}

\smallskip

\begin{Lem}\label{Lem:05}
Let $q\geq11$.
Then there exist $r_q\in\{ 5, 9 \}$, $s_q\in\{ 12, 16 \}$ such that the $m$-gonal form $f(\mathbf{x})=\qf{1,2,2,2}_m$ represents integers
    \[
    a_q=(m-2)q-r_q, \qquad b_q=(m-2)q+s_q.
    \]
Consequently, if we let
    \[
    A_q=(n+3)a_q, \qquad B_q=(n+3)b_q,
    \]
then both integers $A_q$ and $B_q$ are represented by each of the head parts $F_h$ and $G_h$.
\end{Lem}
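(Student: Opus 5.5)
The plan is to realize $a_q$ and $b_q$ through the same lattice device used for Theorem~\ref{Thm:01}, but with the linear form $\ell(\mathbf{x})$ fixed to a small constant rather than to $(m-2)h+r$. Recall that
\[
f(\mathbf{x})=\tfrac{m-2}{2}\bigl[g(\mathbf{x})-\ell(\mathbf{x})\bigr]+\ell(\mathbf{x}).
\]

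For the lower value, impose $\ell(\mathbf{x})=-r$ and $g(\mathbf{x})=2q-r$ with $r\in\{5,9\}$. Then
\[
f(\mathbf{x})=\tfrac{m-2}{2}\,(2q-r+r)-r=(m-2)q-r .
\]
For the upper value, impose $\ell(\mathbf{x})=s$ and $g(\mathbf{x})=2q+s$ with $s\in\{12,16\}$. Then
\[
f(\mathbf{x})=\tfrac{m-2}{2}\cdot 2q+s=(m-2)q+s .
\]
Both systems have the same shape as \eqref{eq:1}: $7$ is represented by $g$ essentially uniquely as $1+2+2+2$. So each system is solvable if and only if $g=\qf{1,2,2,2}_4$ represents the corresponding binary lattice,
\[
\binlattice{7}{-r}{-r}{2q-r}\quad\text{or}\quad \binlattice{7}{s}{s}{2q+s}.
\]
By the criterion from \cite[Lemma 3.3]{dPark-2024} quoted before Remark~\ref{Rmk:01}, it suffices that the lattice is positive definite and its determinant is not of the form $4^{u}(16t+14)$.

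Next I compute the determinants. They are $14q-7r-r^2$ and $14q+7s-s^2$. For $r=5,9$ these equal $14q-60$ and $14q-144$. For $s=12,16$ they give exactly the same two numbers, $14q-60$ and $14q-144$. Since $q\ge 11$, both are positive, because $14\cdot 11-144=10>0$; this is exactly where the hypothesis $q\geq 11$ enters.

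The key step is to show that one of $D_1=14q-60$ and $D_2=14q-144=D_1-84$ avoids the bad form. The excluded numbers are $\equiv 14 \pmod{16}$ when $u=0$ and $\equiv 0\pmod 8$ when $u\ge1$. If $D_1\equiv 14\pmod{16}$, then $D_2\equiv 14-84\equiv 10\pmod{16}$. This is not $14\pmod{16}$, and it is not $0\pmod 8$, so $D_2$ is good. If $D_1\equiv 0\pmod 8$, then $D_2\equiv 4\pmod 8$, which is neither $0\pmod 8$ nor $14\pmod{16}$, so again $D_2$ is good. Otherwise $D_1$ itself is good. So there is always a choice of $r_q$, and since the determinants coincide, the same parity of choice ($5\leftrightarrow 12$, $9\leftrightarrow 16$) works for $s_q$. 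This mirrors the mod-$8$ trick in Lemma~\ref{Lem:01}, and it is the only step that needs real care.

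Finally, given $a_q=P_m(y_1)+2P_m(y_2)+2P_m(y_3)+2P_m(y_4)$, I regroup exactly as in the proof of Theorem~\ref{Thm:02}, with $p$ replaced by $n+3$:
\[
(n+3)a_q=nP_m(y_4)+(n+1)P_m(y_3)+(n+2)P_m(y_2)+(n+3)P_m(y_1)+(n+4)P_m(y_2)+(n+5)P_m(y_3)+(n+6)P_m(y_4).
\]
The right-hand side is a value of $F_h$, and of $G_h$, which has the same coefficients $n,\dots,n+6$. The same identity applied to $b_q$ shows that $B_q$ is represented by both head parts as well.
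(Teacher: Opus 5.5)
Your proposal is correct and follows the paper's proof: the paper also takes $h=0$ with $r\in\{-5,-9\}$ or $r\in\{12,16\}$ in $A[0]$, gets the common determinants $D_1=14q-60$ and $D_2=D_1-84$, and uses the same mod $16$ / mod $8$ argument to show that one of them avoids $4^s(16t+14)$. It then regroups $(n+3)\qf{1,2,2,2}_m$ into $F_h$ and $G_h$; your assignment of the $y_i$ to the coefficient pairs and your residue-based case split are only harmless rephrasings of the paper's.
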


\begin{proof}
We first prove that $a_q$ and $b_q$ are represented by $f=\qf{1,2,2,2}_m$.
The argument in Section~\ref{Sec:qf{1222}_m} remains valid for arbitrary integers $q$ and $r$.
Thus it suffices to find a positive definite binary lattice $A[0]$ satisfying
    \[
    | A[0] | = \det \begin{pmatrix} ~7 & ~r~ \\ ~r & ~2q+r~ \end{pmatrix}
               = 14q + 7r - r^2
               \neq 4^s(16t+14)
    \]
for all nonnegative integers $s$ and $t$.
Here, the restriction $0 \leq r \leq m-3$ used in Section~\ref{Sec:qf{1222}_m} is needed only to choose a canonical representative modulo $m-2$.

Note that
    \begin{align*}
    | A[0] | = \begin{cases} ~D_1:= 14q-60   & \text{ if } r = -5, \, 12, \\
                                        ~D_2:= 14q-144 & \text{ if } r = -9, \, 16.
                  \end{cases}
    \end{align*}
Since $q \geq 11$, both $D_1$ and $D_2$ are positive.

If $D_1$ is not of the form $4^s(16t+14)$, we take $r_q=5$, $s_q=12$.
Then the corresponding determinants $| A[0] |$ for $a_q=(m-2)q-r_q$ and $b_q=(m-2)q+s_q$ are both equal to $D_1$.
Suppose now that $D_1=4^s(16t+14)$ for some nonnegative integers $s$ and $t$.
Since $D_2 = D_1 - 84$, we have
    \[
    D_2 = 14q - 144 = 4^s(16t+14) - 84
              \equiv \begin{cases} ~10 \pmod{16} & \text{ if } s= 0,\\
                                              ~~4 \pmod{8}   & \text{ if } s\geq 1,
                         \end{cases}
    \]
Thus $D_2$ is not of the form $4^u(16v+14)$ for any nonnegative integers $u$ and $v$.
In this case, we take $r_q=9$, $s_q=16$.
Then the corresponding determinants $| A[0] |$ for $a_q$ and $b_q$ are both equal to $D_2$.
In either case, the relevant binary lattices are positive definite.
Therefore, by the representation criterion in Section~\ref{Sec:qf{1222}_m}, both $a_q$ and $b_q$ are represented by the $m$-gonal form $f=\qf{1,2,2,2}_m$.

If an integer $\alpha$ is represented by  $f(\mathbf{x}) = \qf{1,2,2,2}_m$, then $(n+3)\alpha$ is represented by both $F_h$ and $G_h$, since
    \begin{align*}
    (n+3) \qf{1,2,2,2}_m
    &= (n+3) \,[ P_m(y_1) + 2P_m(y_2) + 2P_m(y_3) + 2P_m(y_4)]\\
    &= nP_m(y_2) + (n+1)P_m(y_3) + (n+2)P_m(y_4) + (n+3)P_m(y_1) \\
    &\hspace{12.5ex} + (n+4)P_m(y_4) + (n+5)P_m(y_3) + (n+6)P_m(y_2) \\
    &= F_h (y_2, y_3, y_4, y_1, y_4, y_3, y_2) = G_h (y_2, y_3, y_4, y_1, y_4, y_3, y_2).
    \end{align*}
Applying this to $\alpha=a_q$ and $\alpha=b_q$ shows that
    \[
    A_q=(n+3)a_q\quad\text{and}\quad B_q=(n+3)b_q
    \]
are represented by $F_h$ and $G_h$ as claimed.\\
\end{proof}

\smallskip

\begin{Lem}\label{Lem:06}
Let $n \geq 27$ and $\displaystyle 3 \leq m \leq \lfloor \tfrac{3n+1}{2} \rfloor$.
Then the $m$-gonal forms $F$ and $G$ represent all intervals of consecutive integers $\displaystyle \bigcup_{q \geq 11} I_q $ and $\displaystyle \bigcup_{q \geq 11} J_q$, respectively, where
    \begin{align*}
    I_q = (A_q + V) \cup (B_q + V)  \quad \text{ and } \quad
    J_q = (A_q + W) \cup (B_q + W)
    \end{align*}
for $q \geq 11$.
\end{Lem}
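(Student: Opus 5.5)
The plan is to repeat the argument of Lemma~\ref{Lem:04}, now with the values $A_q=(n+3)a_q$ and $B_q=(n+3)b_q$ supplied by Lemma~\ref{Lem:05}. Here $a_q=(m-2)q-r_q$ and $b_q=(m-2)q+s_q$, with $r_q\in\{5,9\}$ and $s_q\in\{12,16\}$. I treat $F$ in detail. The case of $G$ is identical with $V$ replaced by $W$, whose length and starting point are only slightly different.

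By Lemma~\ref{Lem:05}, both $A_q$ and $B_q$ are represented by the head part $F_h$. By Lemma~\ref{Lem:03}(2), the sets $A_q+V$ and $B_q+V$ are therefore intervals of consecutive integers represented by $F$. Write $L=\tfrac12(3n^2-17n-56)-(2n+7)=\tfrac12(3n^2-21n-70)$ for the length of $V$. Two intervals $x+V$ and $y+V$ with $x\le y$ overlap or are adjacent exactly when $y-x\le L$. So the proof reduces to two gap inequalities.

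\emph{Within $I_q$.} Since $r_q+s_q\le 25$,
\[
B_q-A_q=(n+3)(r_q+s_q)\le 25(n+3).
\]
It therefore suffices that $25(n+3)\le \tfrac12(3n^2-21n-70)$, which is equivalent to $3n^2-71n-220\ge 0$. This holds for $n\ge 27$, since $3\cdot 27^2-71\cdot 27-220=50>0$. Hence each $I_q$ is an interval.

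\emph{Between $I_q$ and $I_{q+1}$.} Since $r_{q+1}\ge 5$ and $s_q\ge 12$,
\[
A_{q+1}-B_q=(n+3)\bigl(m-2-r_{q+1}-s_q\bigr)\le (n+3)(m-19).
\]
Using $m\le\tfrac12(3n+1)$, this is at most
\[
(n+3)\cdot\tfrac12(3n-37)=\tfrac12(3n^2-28n-111)\le L .
\]
If $A_{q+1}\le B_q$, the two intervals overlap trivially. Thus $\bigcup_{q\ge 11}I_q$ is a single interval of consecutive integers starting at $A_{11}+2n+7$ and unbounded above, because $A_q\to\infty$.

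The main point needing care is this second gap estimate: it is the only place where the hypothesis $m\le\lfloor\tfrac{3n+1}{2}\rfloor$ enters. I would also check the junction with Lemma~\ref{Lem:04}, since the next step presumably glues $\bigcup_{q\le10}I_q$ to $\bigcup_{q\ge11}I_q$. There
\[
A_{11}-B_{10}=(n+3)(11m-22-r_{11}-10m+6)\le (n+3)(m-21)\le L
\]
by the same computation. For $G$, $W$ has length $\tfrac12(3n^2-19n-102)$, which is smaller by $n+16$. The two inequalities then become $3n^2-69n-252\ge0$ and $\tfrac12(3n^2-28n-111)\le\tfrac12(3n^2-19n-102)$. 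Both again hold for $n\ge27$, the first giving $3\cdot 27^2-69\cdot 27-252=27>0$.
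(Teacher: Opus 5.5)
Your proposal is correct and is essentially the paper's proof: the same two gap estimates, $B_q-A_q\le 25(n+3)\le\tfrac12(3n^2-21n-70)$ and $A_{q+1}-B_q\le (n+3)(m-19)\le (n+3)\tfrac{3n-37}{2}\le\tfrac12(3n^2-21n-70)$, with the bound on $m$ used only in the second. One harmless slip: $W$ is longer than $V$ by $n-16$, not shorter by $n+16$, so the $G$ case follows directly from the $F$ case; also $3\cdot 27^2-69\cdot 27-252=72$, not $27$.
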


\begin{proof}
The argument parallels Lemma~\ref{Lem:04}.\\

\Step{1)} For $q \geq 11$ and the integers $a_q$, $b_q$ as defined in Lemma \ref{Lem:05}, we know that
    \begin{align*}
    a_q - b_q &= \bigl[ (m-2)q - r_q \bigr] - \bigl[  (m-2)q + s_q \bigr] \\
                    &= - r_q - s_q \geq -9 -16 =-25,\\
    b_q - a_{q+1} &= \bigl[ (m-2)q + s_q \bigr] - \bigl[ (m-2)(q+1) - r_{q+1} \bigr]\\
                          &= r_{q+1} + s_q -(m-2) \geq 5+12-(m-2) = -m+19.
    \end{align*}
Moreover, both integers $A_q$, $B_q$ are represented by $F_h$ and the shifted sets
    \begin{align*}
    A_q + V  &= \{ ~A_q + \beta ~\mid~ \beta \in V~ \}
                   = \left\{ ~A_q+2n+7, \,  \ldots, \,  A_q+\tfrac{1}{2}(3n^2 - 17n - 56)~ \right\},\\
    B_q + V  &= \{ ~B_q + \beta ~\mid~ \beta \in V~ \}
                   = \left\{ ~B_q+2n+7, \,  \ldots, \,   B_q+\tfrac{1}{2}(3n^2 - 17n - 56)~ \right\}
    \end{align*}
are intervals of consecutive integers represented by $F$ by Lemma \ref{Lem:03}.

\Step{2)} For each $q \geq 11$, we can show that the last integer in $A_q + V$ and the first integer in $B_q + V$ overlap or are consecutive, i.e.,
    \begin{align*}
    & \bigl[ A _q+ \tfrac{1}{2}(3n^2-17n-56) \bigr]- \bigl[  B_q+2n+7 \bigr]\\
    &= \tfrac{1}{2} (3n^2 - 21n - 70) + (n+3)(a_q - b_q)  \geq \tfrac{1}{2} (3n^2 - 21n - 70) - 25(n+3) \\
    &= \tfrac{1}{2}(3 n^2 - 71 n - 220) \geq 0 \quad \text{ as } \quad n \geq 27.
    \end{align*}
Therefore their union,
    \[
    I_q = (A_q + V) \cup (B_q + V),
    \]
is also a consecutive interval represented by $F$ for all $q \geq 11$.

\Step{3)} We verify that for $q \geq 11$, the last integer in $B_q + V$ and the first integer in $A_{q+1} + V$ overlap or are consecutive,  i.e.
    \begin{align*}
    &\bigl[  (n+3)b _q+ \tfrac{1}{2}(3n^2-17n-56) \bigr] - \bigl[ (n+3)a_{q+1} + 2n+7 \bigr]\\
    &= \tfrac{1}{2}(3 n^2 - 21 n - 70) + (n+3)(b_q - a_{q+1})\\
    &\geq \tfrac{1}{2} (3n^2 - 21n - 70) - (n+3)(m-19) \\
    &\geq \tfrac{1}{2} (3n^2 - 21n - 70) - (n+3)(\tfrac{1}{2}(3n+1)-19)
    = \tfrac{1}{2}(7 n + 41)  \geq 0
    \end{align*}
as $n \geq 27$.
Therefore, the union
    \begin{align*}
    \bigcup_{q \geq 11} I_q
    &= \bigcup_{q \geq 11} (A_q + V \cup B_q + V)
    \end{align*}
is an interval of consecutive integers represented by the $m$-gonal form $F$.
An analogous calculation shows the same for the union $\displaystyle \bigcup_{q \ge 11} J_q$, which is an interval of consecutive integers represented by the $m$-gonal form $G$.
\end{proof}

\smallskip

\begin{Lem}\label{Lem:07}
Let $n \geq 27$ and $\displaystyle 3 \leq m \leq \lfloor \tfrac{3n+1}{2} \rfloor$.
The $m$-gonal forms
    \[
    F=\qf{n, n, n+1, \ldots, 2n-1}_m \text{ and } G=\qf{n, n+1, n+2,  \ldots, 2n}_m
    \]
represent the intervals of consecutive integers
   \begin{align*}
   I_0 = &\left\{ ~n, \, n+1, \, n+2,  \, \ldots, \,  \tfrac{1}{2}(3n^2-n-2), \,  \tfrac{1}{2}(3n^2-n)~ \right\}\\
            &\bigcup \left\{ ~n(m-3)+2n+1, \, n(m-3)+2n+2, \, \ldots, n(m-3)+2n+6~ \right\}
   \end{align*}
and
   \[
   \hspace{-15ex} J_0 = \left\{ ~n, \, n+1, \, n+2,  \, \ldots, \,  \tfrac{1}{2}(3n^2+n-2), \,  \tfrac{1}{2}(3n^2+n)~ \right\}
   \]
respectively.
\end{Lem}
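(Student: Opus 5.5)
The plan is to use only $0/1$ assignments, i.e.\ subset sums of the coefficients, since $P_m(0)=0$ and $P_m(1)=1$ for every $m$. I will first treat the multiset $C=\{n,n+1,\ldots,2n-1\}$, which is common to $F$ and $G$. For a fixed size $i$, the sums of $i$ distinct elements of $C$ form a full interval, exactly as in the proof of Lemma~\ref{Lem:03}(1): one moves from the $i$ smallest elements to the $i$ largest by raising one index at a time by $1$. The minimum is $\tfrac12 i(2n+i-1)$ and the maximum is $\tfrac12 i(4n-i-1)$. Comparing the maximum for size $i$ with the minimum for size $i+1$ gives the difference
\[
ni-n-i^2-i=n(i-1)-i(i+1).
\]
This is $\ge -1$ for $2\le i\le n-2$, so consecutive sizes overlap or abut there. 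There are only two gaps. At $i=1$ the integer $2n$ is missed. At the top the integers $\tfrac12(3n^2-n)-j$ with $1\le j\le n-1$ are missed, because size $n-1$ reaches only up to $\text{total}-n$.

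For $F$, the repeated coefficient $n$ closes both gaps. We get $2n=n+n$. For the top, taking all of $C$ except $n+j$ and adding the extra $n$ gives $\tfrac12(3n^2-n)-j$ for $1\le j\le n-1$. This yields the first interval of $I_0$. For the six extra integers $n(m-3)+2n+k$ with $1\le k\le 6$, I will put $x=-1$ on one copy of $n$, which contributes $P_m(-1)=n(m-3)$. I will then add the other copy of $n$ together with one coefficient $n+k\le 2n-1$, which contributes $2n+k$. These are exactly the values just below the start $n(m-3)+2n+7$ of $A_1+V$ in Lemma~\ref{Lem:04}, which explains why they are singled out.

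For $G$ the coefficient set is $\{n,\ldots,2n\}$, with $n+1$ elements. The same size-$i$ comparison now gives the difference $n(i-1)-i^2$. This is fine for $1\le i\le n-1$, so there is no gap at $2n$, since $2n$ is itself a coefficient. At the top, however, size $n$ covers only $\tfrac12(3n^2+n)-2n,\ldots,\tfrac12(3n^2+n)-n$. The integers $\tfrac12(3n^2+n)-j$ with $1\le j\le n-1$ still need separate treatment.

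I expect this top range for $G$ to be the main obstacle. Subset sums alone cannot fill it, so some variable must take a value other than $0$ or $1$. My first attempt will be to drop two or three coefficients and give some coefficient $a$ the value $x=2$ or $x=-1$, gaining $a(m-1)$ or $a(m-4)$ respectively. I will then try to tune the dropped set, using $3\le m\le\lfloor\frac{3n+1}{2}\rfloor$ and $n\ge27$, so that every deficit $j\in[1,n-1]$ is realized. If this fails for some $j$, I will reexamine whether the endpoint of $J_0$ should be read as stated, or whether those top values are in any case covered by the intervals $J_q$ of Lemma~\ref{Lem:04}, which is all that is ultimately needed for tight universality.
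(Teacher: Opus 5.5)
\textbf{The $J_0$ half is not proved.} The ``main obstacle'' you identify for $G$ does not exist; it comes from using the wrong coefficient total. The coefficients of $G$ sum to $n+(n+1)+\cdots+2n=\tfrac12(3n^2+3n)$. The value $\tfrac12(3n^2+n)$ you used is the total for $F$. Deleting a single coefficient $n+j$ with $0\le j\le n$ gives the size-$n$ sums $\tfrac12(3n^2+3n)-(n+j)=\tfrac12(3n^2+n)-j$, which fill $[\tfrac12(3n^2-n),\tfrac12(3n^2+n)]$. Your own correct inequality $n(i-1)-i^2\ge -1$ for $1\le i\le n-1$ then shows that the $0/1$ values of $G$ cover every integer from $n$ to $\tfrac12(3n^2+n)$. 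In particular, $\tfrac12(3n^2+n)-j$ for $1\le j\le n-1$ comes from $x=0$ on the coefficient $n+j$ and $x=1$ everywhere else. So your claim that subset sums cannot reach the top of $J_0$ is false. No variable needs the value $2$ or $-1$, and there is no reason to doubt the endpoint of $J_0$.

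\textbf{A smaller slip for $F$.} The difference $n(i-1)-i(i+1)$ is symmetric about $i=\tfrac{n-1}{2}$. It therefore equals $-2$ not only at $i=1$ but also at $i=n-2$, so your range ``$2\le i\le n-2$'' should be $2\le i\le n-3$. Consequently the subset sums of $C=\{n,\ldots,2n-1\}$ miss a third integer, $\tfrac12(3n^2-n)-2n=\tfrac12(3n^2-5n)$, because no subset of $C$ sums to $2n$. The second copy of $n$ rescues it, e.g.\ $\tfrac12(3n^2-5n)=n+\bigl(\tfrac12(3n^2-n)-(n+1)-(2n-1)\bigr)$. More uniformly, every integer in $[n,\tfrac12(3n^2-n)]$ missed by $C$ is exactly $n$ more than a subset sum of $C$.

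\textbf{Connecting the two pieces of $I_0$.} For $I_0$ to be an interval you must check that its two pieces abut, i.e.\ $\tfrac12(3n^2-n)-[n(m-3)+2n+1]=\tfrac12(3n^2+n-2)-mn\ge -1$. This is the only place where $m\le\lfloor\tfrac{3n+1}{2}\rfloor$ enters.

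\textbf{Comparison with the paper.} With these repairs, your size-by-size subset-sum argument is a valid alternative to the paper's proof. The paper instead combines the tail intervals $V$, $W$ of Lemma~\ref{Lem:03} with the complement symmetry $F(\mathbf{x})+F(\mathbf{1}-\mathbf{x})=F(\mathbf{1})$ for $0/1$ vectors. Your route is self-contained, since it does not use Lemma~\ref{Lem:03}, and it works for much smaller $n$.
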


\begin{proof}
We prove this Lemma through the following steps using a symmetry relation.

\smallskip
\Step{1)}
Since $P_m(1) =1$, we have the relation
    \begin{align*}
    & F(x_0, x_1, x_2, \ldots, x_n) + F(1-x_0, 1-x_1, 1-x_2, \ldots, 1-x_n) \\
    &= F(1, 1, 1, \ldots, 1) \\
    &= n + n + (n+1) + \ldots + (2n-1) = \tfrac{1}{2}(3n^2+n),
    \end{align*}
with $x_j =0, 1$ and $0 \leq j  \leq n$.
If $F$ represents an integer $k$ with
    \[
     F(x_0, x_1, x_2, \ldots, x_n) = k, \quad x_j=0, 1, \quad 0 \leq j  \leq n,
     \]
then $F$ represents an integer
    \[
    F(1-x_0, 1-x_1, 1-x_2, \ldots, 1-x_n) = \tfrac{1}{2}(3n^2+n) -k.
    \]

\smallskip
\Step{2)}
On the other hand, $F(x_0, x_1, x_2, \ldots, x_n)$ represents all consecutive integers from
$n$ through $2n+6$ with $x_j =0, 1$ and $0 \leq j  \leq n$.
By Lemma \ref{Lem:03},  $F(x_0, x_1, x_2, \ldots, x_n)$ represents intervals of consecutive integers
    \[
    V = \bigcup_{i=2}^{n-8} V_i =\{ 2n+7, \, 2n+8, \,  \ldots, \,  \tfrac{1}{2}(3n^2-17n-56) \}
    \]
with $x_j =0, 1$ and $0 \leq j  \leq n$.
Therefore  $F$ represents all consecutive integers
    \[
    n, \, n+1,\, n+2, \, \ldots, \, 2n+6, \, 2n+7, \, 2n+8, \, \ldots,  \,  \tfrac{1}{2}(3n^2-17n-56).
    \]
The above symmetry relation forces $F$ to represent all consecutive integers
    \begin{align*}
    \tfrac{1}{2}(3n^2+n) - \tfrac{1}{2}(3n^2-17n-56) &= 9n+28, \\
                                                                                    & \,\,\, \vdots \\
    \tfrac{1}{2}(3n^2+n) - (n+1) &= \tfrac{1}{2}(3n^2 -n-2), \\
    \tfrac{1}{2}(3n^2+n) - n &= \tfrac{1}{2}(3n^2 -n).
    \end{align*}
Since
    \[
    \tfrac{1}{2}(3n^2-17n-56)  -  (9n+28) = \tfrac{1}{2}(3n^2-35n-112) >0
    \]
for all $n \geq 27$, the $m$-gonal form $F$ represents an interval of consecutive integers
   \[
   \left\{ ~n, \, n+1, \, n+2,  \, \ldots, \,  \tfrac{1}{2}(3n^2-n-2), \,  \tfrac{1}{2}(3n^2-n)~ \right\}.
   \]

\smallskip
\Step{3)}
By a simple observation, the $m$-gonal form $F$ represents all  consecutive integers from
    \[
    n(m-3)+ 2n+1 \quad \text{ through } \quad n(m-3)+ 2n+6
    \]
as $nP_m(-1)+nP_m(1) = n(m-3)+n$ and $(n+j)P_m(1)=n+j$.
By the following careful calculation
    \begin{align*}
    \tfrac{1}{2}(3n^2-n) - [~ n(m-3)+ 2n+1~]
    &= \tfrac{1}{2}(3n^2 + n -2) -mn   \\
    &\geq \tfrac{1}{2}(3n^2 + n -2) - \tfrac{1}{2}(3n+1) \cdot n = -1,
    \end{align*}
the $m$-gonal form $F$ represents an interval of consecutive integers
   \begin{align*}
   I_0 = &\left\{ ~n, \, n+1, \, n+2,  \, \ldots, \,  \tfrac{1}{2}(3n^2-n-2), \,  \tfrac{1}{2}(3n^2-n)~ \right\}\\
            &\bigcup \left\{ ~n(m-3)+2n+1, \, n(m-3)+2n+2, \, \ldots, n(m-3)+2n+6~ \right\}.
   \end{align*}
Similarly, we obtain that the $m$-gonal form $G$ represents an interval of consecutive integers
   \[
   J_0 = \left\{ ~n, \, n+1, \, n+2,  \, \ldots, \,  \tfrac{1}{2}(3n^2+n-2), \,  \tfrac{1}{2}(3n^2+n)~ \right\}.
   \]
\end{proof}

\smallskip

\begin{Thm}\label{Thm:05}
Let $n \geq27$.
If $3 \leq m \leq \lfloor \frac{3n+1}{2} \rfloor$, then the $m$-gonal forms
    \[
    F=\qf{n, n, n+1, \ldots, 2n-1}_m \,\text{ and }\, G=\qf{n, n+1, n+2,  \ldots, 2n}_m
    \]
are tight $\calT(n)$-universal.
\end{Thm}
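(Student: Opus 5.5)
Theorem~\ref{Thm:05} needs two things: $F$ and $G$ represent no positive integer below $n$, and they represent every integer $\geq n$. The first is immediate. Every coefficient of $F$ and $G$ is at least $n$, and $P_m(x)\geq0$ for all integers $x$ when $m\geq3$. Hence any nonzero value is at least $n\cdot\min_{x\neq0}P_m(x)\geq n$, since $P_m(x)\geq1$ for $x\neq0$ (note $P_m(-1)=m-3$, which vanishes only when $m=3$, and then $P_3(-1)=0$ is harmless). For the second, I will chain together the intervals from Lemmas~\ref{Lem:07}, \ref{Lem:04} and~\ref{Lem:06}, and check that consecutive intervals overlap or abut.

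For $F$, Lemma~\ref{Lem:07} gives $I_0$. Its first block is $\{n,\ldots,\tfrac12(3n^2-n)\}$, and its second block ends at $n(m-3)+2n+6$. By the computation in Step 3 of Lemma~\ref{Lem:07}, these two blocks join into one interval $[n,\,n(m-3)+2n+6]$, or, if $n(m-3)+2n+1$ is small, the first block simply swallows the second. In that second situation I must instead check that $\tfrac12(3n^2-n)\geq n(m-3)+2n+6$, and this holds trivially.

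Lemma~\ref{Lem:04} gives the interval $\bigcup_{q=1}^{10}I_q$, which starts at $n(m-3)+2n+7$; so it abuts $I_0$ exactly. It ends at $(n+3)(10m-6)+\tfrac12(3n^2-17n-56)$. Lemma~\ref{Lem:06} gives $\bigcup_{q\geq11}I_q$, which starts at $A_{11}+2n+7=(n+3)\bigl((m-2)11-r_{11}\bigr)+2n+7$ and is unbounded, since $A_q\to\infty$ and each step overlaps the next. Thus $F$ represents every integer $\geq n$ once I verify the junction between $q=10$ and $q=11$:
\[
(n+3)(10m-6)+\tfrac12(3n^2-17n-56)\;\geq\;(n+3)(11m-22-r_{11})+2n+6 .
\]
With $r_{11}\geq5$, this reduces to
\[
\tfrac12(3n^2-21n-68)\;\geq\;(n+3)(m-11),
\]
and, using $m\leq\tfrac12(3n+1)$, to a positive linear quantity in $n$, exactly as in Step 3 of Lemma~\ref{Lem:06}.

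For $G$ the chain is the same: $J_0=[n,\tfrac12(3n^2+n)]$, then $\bigcup_{q\le10}J_q$, then $\bigcup_{q\ge11}J_q$. The only extra junction is $J_0$ against the start $n(m-3)+2n+15$ of $\bigcup_{q\le10}J_q$. This requires $\tfrac12(3n^2+n)\geq n(m-3)+2n+14$, i.e.\ $\tfrac12(3n^2+3n)-14\geq nm$, which again follows from $m\leq\tfrac12(3n+1)$ because $\tfrac12(3n^2+3n)-14-\tfrac12(3n^2+n)=n-14>0$. The $q=10/11$ junction for $G$ uses $\tfrac12(3n^2-15n-72)$ and $2n+15$ in place of the $F$ values, and gives an even easier inequality.

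The main obstacle is only bookkeeping: making sure each junction inequality holds for the full range $3\leq m\leq\lfloor\tfrac{3n+1}{2}\rfloor$, including small $m$, where $(n+3)(m-11)$ is negative and the inequality is trivial. I would also double-check Lemma~\ref{Lem:04}'s implicit requirement that its intervals start no later than where $J_0$ and $I_0$ end when $m$ is small, for example $m=3$, where $n(m-3)=0$ and the blocks overlap heavily. All the inequalities needed are monotone in $m$, so checking the extreme $m=\tfrac12(3n+1)$ with $n\geq27$ suffices.
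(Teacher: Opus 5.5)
Your proposal follows the paper's proof. It uses the same chain $I_0$, then $\bigcup_{q=1}^{10}I_q$ (Lemma~\ref{Lem:04}), then $\bigcup_{q\ge11}I_q$ (Lemma~\ref{Lem:06}), and likewise for $G$. The only substantive check is the junction between $q=10$ and $q=11$. You also make explicit the junction between $J_0$ and the start $n(m-3)+2n+15$ of $\bigcup_{q=1}^{10}J_q$, which the paper covers only by ``a similar argument''. Your computation there (margin $n-14>0$) is correct.

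There is an arithmetic slip in the $q=10/11$ junction that needs fixing. Since $r_{11}\ge5$,
\[
(10m-6)-(11m-22-r_{11}) \;=\; 16+r_{11}-m \;\ge\; 21-m ,
\]
so the required inequality reduces to
\[
\tfrac12(3n^2-21n-68)\;\ge\;(n+3)(m-21),
\]
not $(n+3)(m-11)$. The inequality you wrote is false at the top of the range. For odd $n$ and $m=\tfrac12(3n+1)$ it fails by $\tfrac12(9n+5)$, so the claimed ``positive linear quantity'' does not follow from it.

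With the correct constant $21$, substituting $m\le\tfrac12(3n+1)$ leaves the margin $\tfrac12(11n+55)>0$. This matches the paper's bound $\tfrac12(11n+53)$; the difference of $1$ is only because the paper requires overlap ($2n+7$) where you allow abutting ($2n+6$). Once this constant is corrected, the rest of your bookkeeping goes through, including the $G$ version of the junction, where $W$ is longer than $V$.
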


\begin{proof}
Since every coefficient of $F$ and $G$ is at least $n$ and $P_m(x)\geq0$ for every $x \in \Z$, neither form represents a positive integer less than $n$.
Furthermore, by the preceding Lemma \ref{Lem:07}, Lemma \ref{Lem:04}, Lemma \ref{Lem:06},  $F$ represents all intervals of consecutive integers
    \begin{align*}
    I_0
    &=\left\{ ~n, \, n+1, \, n+2,  \, \ldots, \,  \tfrac{1}{2}(3n^2-n-2), \,  \tfrac{1}{2}(3n^2-n)~ \right\}\\
    &\hspace{2ex} \bigcup \left\{ ~n(m-3)+2n+1, \,  n(m-3)+2n+2, \, \ldots, n(m-3)+2n+6~ \right\}\\
    \bigcup_{q=1}^{10} I_q
    &= \left\{
         \begin{array}{ll}
         n(m-3)+2n+7, \,  n(m-3)+2n+8,  \\
         \hspace{15ex} \ldots, \,(n+3)(10m-6)+\frac{1}{2}(3n^2-17n-56)
         \end{array}
         \right\},\\
    \bigcup_{q \geq 11} I_q
    &= \left\{ ~A_{11}+2n+7, \quad A_{11}+2n+8,  \,   \ldots  \hspace{9ex} \right\}.
    \end{align*}
Since
    \begin{align*}
    & \bigl[ (n+3)(10m-6)+\tfrac{1}{2}(3n^2-17n-56) \bigr] - \bigl[ A_{11}+(2n+7) \bigr]\\
    &\geq {(n+3)(10m-6)+\tfrac{1}{2}(3n^2-17n-56)} - ( (n+3)(11(m-2)-5)+(2n+7) )\\
    &= \tfrac{1}{2}(3n^2+21n+56) -(n+3)m \\
    &\geq  \tfrac{1}{2}(3n^2+21n+56)-  (n+3)  \cdot \tfrac{3n+1}{2} = \tfrac{1}{2}(11n+53) \geq 0,
    \end{align*}
the last integer in $\displaystyle \bigcup_{q=1}^{10} I_q$ and the first integer in $\displaystyle \bigcup_{q \geq 11} I_q$ overlap.
Therefore, $\displaystyle \calT(n) = \bigcup_{q \geq 0} I_q$ and we have proven that $F$ is tight $\calT(n)$-universal.
By a similar argument, we have proven that $G$ is tight $\calT(n)$-universal.
\end{proof}

\smallskip

\section{Optimality of the Universality Range}\label{Sec:Optimality}

In this section, we establish the optimality of the universality range corresponding to the red region in Figure~\ref{fig:CS(m,n)}.
More precisely, we show that for $n \geq 8$ and $m \geq \frac{3n+2}{2}$, at least one of the $m$-gonal forms
    \[
    F=\qf{n, n, n+1, \ldots, 2n-1}_m \quad \text{ and } \quad G=\qf{n,n+1,n+2,\ldots, 2n}_m
    \]
fails to be tight $\calT(n)$-universal.

Throughout this section, we use the facts
    \[
    P_m(0)=0,\qquad P_m(1)=1,\qquad P_m(-1)=m-3.
    \]
Since $n \geq 8$ and $m \geq \frac{3n+2}{2}$, we have $m \geq 13$.
Hence $P_m(x)\geq m-3$ for $x \notin \{ 0,1 \}$, and $P_m(x)\geq m$ for $x \notin \{ 0, 1, -1\}$.
For notational convenience, we write
    \[
    F(\mathbf{x}) = nP_m(x_0)+\sum_{i=1}^{n}(n+i-1)P_m(x_i) \quad \text{ and } \quad
    G(\mathbf{x}) = \sum_{i=0}^n (n+i) P_m(x_i).
    \]

\smallskip

\begin{Prop}\label{Prop:non-representation-large-m}
Let $n\geq8$ and $m > \left\lfloor \frac{1}{2}(3n+3) \right\rfloor$.
Then the $m$-gonal form
    \[
    F = \qf{n,n,n+1,n+2,\ldots,2n-1}_m
    \]
does not represent the integer
    \[
    N:=\frac{1}{2}(3n^2-n) + 1.
    \]
\end{Prop}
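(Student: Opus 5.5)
We argue by contradiction: suppose $F(\mathbf{x})=N$ for some $\mathbf{x}\in\Z^{n+1}$, and sort the indices according to the value of $x_i$. We use $P_m(0)=0$, $P_m(1)=1$, $P_m(-1)=m-3$, and the fact recorded above that $P_m(x)\geq m$ for $x\notin\{0,1,-1\}$.

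First we record the bound on $m$. The hypothesis $m>\lfloor\frac{3n+3}{2}\rfloor$ gives $m\geq\frac{3n+5}{2}$ when $n$ is odd and $m\geq\frac{3n+4}{2}$ when $n$ is even. In either case
\[
m-3\geq \tfrac{3n-2}{2}.
\]
Also write $S:=F(1,1,\ldots,1)=\tfrac12(3n^2+n)$, so that $N=S-(n-1)$.

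\Case{1)} All $x_i\in\{0,1\}$. Then $N$ is a subset sum of the coefficients, and the complementary subset has sum $S-N=n-1$. Since $0<n-1<n$, this complementary subset is nonempty and its sum is smaller than every coefficient. This is a contradiction.

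\Case{2)} Some $x_i\notin\{0,1,-1\}$, say with coefficient $c\geq n$. Then
\[
F(\mathbf{x})\geq n\,m\geq \tfrac{n(3n+4)}{2}>N,
\]
which is a contradiction.

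\Case{3)} At least two indices satisfy $x_i=-1$. Then
\[
F(\mathbf{x})\geq 2n(m-3)\geq n(3n-2)=3n^2-2n>\tfrac12(3n^2-n)+1=N
\]
for $n\geq 8$, again a contradiction.

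\Case{4)} Exactly one index has $x_i=-1$, with coefficient $c$, and all other $x_j\in\{0,1\}$. Then $N=c(m-3)+T$, where $T$ is a subset sum of the remaining coefficients. In particular, either $T=0$ or $T\geq n$.

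If $c\geq n+1$, then
\[
c(m-3)\geq \tfrac{(n+1)(3n-2)}{2}=\tfrac{3n^2+n-2}{2}>\tfrac{3n^2-n+2}{2}=N,
\]
which is impossible. Hence $c=n$. In that case
\[
0\leq T=N-n(m-3)\leq \tfrac{3n^2-n+2}{2}-\tfrac{n(3n-2)}{2}=\tfrac n2+1<n,
\]
so $T=0$ and therefore $n(m-3)=N$.

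The last step is a congruence check showing that $n\nmid N$.
\begin{itemize}
\item If $n$ is odd, then $n\mid \tfrac{n(3n-1)}{2}$, so $N\equiv 1\pmod n$.
\item If $n$ is even, then $\tfrac{n(3n-1)}{2}=\tfrac n2(3n-1)$ with $3n-1$ odd, so $N\equiv \tfrac n2+1\pmod n$. This residue is nonzero since $n\geq 8$.
\end{itemize}
In both cases $n(m-3)=N$ is impossible, which finishes Case 4.

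The main obstacle is Case 4. There the crude size bounds do not suffice, because $n(m-3)$ can be smaller than $N$. The argument has to squeeze the leftover $T$ into the gap $(0,n)$ using the sharpened, parity-dependent bound on $m-3$, and then close with the divisibility computation. Care is needed with the floor in the hypothesis so that $T\leq \frac n2+1<n$ holds in both parities.
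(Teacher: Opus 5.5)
Your proof is correct and follows essentially the same route as the paper. It uses the complement subset-sum argument for $\{0,1\}$-vectors, the size bounds $nm>N$ and $(n+1)(m-3)>N$, the reduction to a single $-1$ on a coefficient-$n$ variable, and a divisibility check ruling out $N=n(m-3)$. The differences are only bookkeeping: you rule out two $-1$'s explicitly via $2n(m-3)>N$, where the paper absorbs this into the remark that $\qf{n,n+1,\ldots,2n-1}_m$ represents only $0$ or integers at least $n$, and you split by parity where the paper uses $2R\equiv 2 \pmod{n}$.
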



\begin{proof}
Suppose that $F(\mathbf{x})=N$ for some $\mathbf{x} = (x_0, x_1, \ldots, x_n) \in \Z^{n+1}$.

\smallskip

First suppose that $x_i \in \{0,1\}$ for all $i$.
Since
    \[
    F(1,\ldots,1) = n + n + (n+1) + \cdots + (2n-1) = \frac{n(3n+1)}{2} = N + (n-1),
    \]
every proper subsum of the coefficients of $F$ is at most
    \[
    F(1,\ldots,1) - n < N,
    \]
whereas the full sum of the coefficients of $F$ is greater than $N$.
Thus some $x_i \notin \{0,1\}$.

If $x_i \notin \{0,1\}$ for some $i \geq 2$, then, since $m > \lfloor\frac{3n+3}{2} \rfloor$ implies $m \geq \frac{3n+4}{2}$, we have
    \[
    F(\mathbf{x})\geq(n+1)(m-3)\geq\frac{3n^2+n-2}{2}>N,
    \]
a contradiction.
Hence $x_i \in \{0,1\}$ for every $i \geq 2$, and at least one of $x_0, x_1$ lies outside $\{0,1\}$.

If $x_0 \neq -1$ and $x_1 \neq -1$, then one of them lies outside
$\{0,1,-1\}$, and hence
    \[
    F(\mathbf{x}) \geq nm >n \,\frac{3n+3}{2} > N,
    \]
a contradiction. Thus, by symmetry, we may assume that $x_0=-1$.
The remaining variables must then represent
    \[
    R:= N - nP_m(-1) = N-n(m-3)  =  \frac{n(3n+5-2m)+2}{2}.
    \]
  Since $m \geq \frac{3n+4}{2}$,
    \[
    R = \frac{n(3n+5-2m)+2}{2} \leq \frac{n(3n+5-(3n+4))+2}{2} = \frac{n+2}{2} <n.
    \]
Moreover, $R \neq 0$, since $2R \equiv 2 \pmod n$ whereas $n \nmid 2$.
However, the remaining form \text{$\qf{n,n+1,\ldots,2n-1}_m$} represents only $0$ or integers at least $n$, a contradiction.
Therefore $F$ does not represent $N$.
\end{proof}

\smallskip

\begin{Prop}\label{Prop:non-representation-even-boundary}
Let $n\geq8$ be even and let $m=\frac{1}{2}(3n+2)$.
Then the $m$-gonal form
    \[
    F=\qf{n,n,n+1,n+2,\ldots,2n-1}_m
    \]
does not represent the integer
    \[
    T := n(m-3)+2n = \frac{3}{2}n^2.
    \]
\end{Prop}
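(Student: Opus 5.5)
The plan is to follow the same scheme as the proof of Proposition~\ref{Prop:non-representation-large-m}. Suppose $F(\mathbf{x})=T$, and first restrict which values the variables $x_i$ can take. Then split into cases according to which coefficient, if any, carries a value other than $0$ or $1$. Throughout I use $m=\frac{3n+2}{2}$. This gives $P_m(-1)=m-3=\frac{3n-4}{2}$, and $P_m(x)\geq m=\frac{3n+2}{2}$ for $x\notin\{0,1,-1\}$.

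\emph{Step 1: all variables in $\{0,1\}$.} The sum of all coefficients of $F$ is $S=\frac{n(3n+1)}{2}=T+\frac{n}{2}$. A representation with $x_i\in\{0,1\}$ would require a nonempty set of coefficients (the complement) to sum to $\frac n2$. This is impossible, since every coefficient is at least $n$. Hence some $x_i\notin\{0,1\}$.

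\emph{Step 2: only the value $-1$ is possible, and only once.} If some $x_i\notin\{0,1,-1\}$, then $F(\mathbf{x})\geq n\,m=\frac{3n^2}{2}+n>T$. So every variable outside $\{0,1\}$ equals $-1$. If two variables equal $-1$, then $F(\mathbf{x})\geq 2n(m-3)=3n^2-4n>\frac32 n^2$ for $n\geq 8$. Hence exactly one variable, with coefficient $c$, equals $-1$. All remaining variables are $0$ or $1$ and must represent $R=T-c(m-3)$ as a subsum of the remaining coefficients.

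\emph{Step 3: case analysis on $c$.} Write $c=n+j$ with $0\le j\le n-1$. Then
\[
R=\tfrac{3n^2}{2}-\tfrac{(n+j)(3n-4)}{2}=2n-\tfrac{j(3n-4)}{2}.
\]
If $j=0$, the $-1$ sits on one of the two copies of $n$, and $R=2n$. The remaining coefficients are one $n$ and $n+1,\dots,2n-1$. No single coefficient equals $2n$, and any two of them sum to at least $n+(n+1)>2n$, so $R$ is not represented. If $j=1$, then $R=\frac{n+4}{2}$, which satisfies $0<R<n$. The remaining coefficients represent only $0$ or integers $\ge n$, a contradiction. If $j\ge 2$, then $R\le 2n-(3n-4)=4-n<0$, which is impossible.

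The only delicate point is the case $j=0$. There one has to use the exact coefficient multiset, namely that the second copy of $n$ is unavailable because it is the coefficient carrying the $-1$, so $2n$ cannot be written as $n+n$. Everything else is elementary size comparison, in the same spirit as Proposition~\ref{Prop:non-representation-large-m}.
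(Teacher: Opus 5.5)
Your proposal is correct and follows essentially the same route as the paper. Both arguments exclude the all-$\{0,1\}$ case by size, force every other variable to equal $-1$ using $P_m(x)\geq m$, and then rule out a $-1$ on a coefficient of at least $n+2$ (too large), on $n+1$ (leaving $\frac{n+4}{2}\in(0,n)$), and on a copy of $n$ (leaving $2n$, which is not a subsum of $n,n+1,\dots,2n-1$). The differences are only organizational: you exclude two $-1$'s up front and unify the cases through $R=2n-\frac{j(3n-4)}{2}$, whereas the paper handles possible further $-1$'s inside each case.
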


\begin{proof}
Suppose that $F(\mathbf{x})=T$ for some $\mathbf{x} = (x_0, x_1, \ldots, x_n) \in \Z^{n+1}$.

\smallskip

Suppose first that $x_i \in \{0,1\}$ for all $i$.
If at least one $x_i$ is zero, then
    \[
    F(\mathbf{x}) \leq F(1,\ldots,1)- nP_m(1) = \frac{n(3n+1)}{2} - n = \frac{n(3n-1)}{2}  < \frac{3n^2}{2} =T.
    \]
On the other hand,
    \[
    F(1,\ldots,1) = \frac{n(3n+1)}{2} > T.
    \]
Thus some $x_i \notin \{ 0,1 \}$.

If $x_i \notin \{ 0,1,-1 \}$ for some $i$, then $P_m(x_i) \geq m$ and hence
    \[
    F(\mathbf{x}) \geq nm= \frac{3n^2+2n}{2} >  \frac{3n^2}{2} =T,
    \]
which is impossible.
Hence every variable outside $\{0,1\}$ must be equal to $-1$.

Suppose that $x_i = -1$ for some $3 \leq i \leq n$.
Since $m-3 = \frac{3n-4}{2}$, we have
    \[
    F(\mathbf{x}) \geq (n+2)(m-3) = (n+2) \,\frac{3n-4}{2} = \frac{3n^2}{2} + (n-4) > \frac{3n^2}{2} =T,
    \]
because $n \geq 8$.
Thus this is impossible.
Suppose next that $x_2=-1$. Then
    \[
    0 < T-(n+1)P_m(-1) = \frac{3n^2}{2} -(n+1)\,\frac{3n-4}{2} = \frac{n+4}{2} < n.
    \]
Since all coefficients of the remaining form $\qf{n,n,n+2,\ldots,2n-1}_m$ are at least $n$, this is impossible.
Hence $x_2\neq-1$.
It remains to consider $x_0=-1$ or $x_1=-1$.
By symmetry, assume that $x_0=-1$.
Then the remaining variables must represent
    \[
    T - nP_m(-1) = [ ~n(m-3)+2n~ ] - n(m-3) = 2n.
    \]
Any variable outside $\{0,1\}$ would contribute at least $n(m-3) > 2n$.
Thus $2n$ would have to be a subsum of $n, n+1, \ldots, 2n-1$.
However, no coefficient equals $2n$, while the sum of the two smallest coefficients is $2n+1$, a contradiction.

This contradiction proves that $F$ does not represent the integer $T$.
\end{proof}

\smallskip

\begin{Prop}\label{Prop:non-representation-odd-boundary}
Let $n \geq 8$ be odd and let $m=\frac{1}{2}(3n+3)$.
Then the $m$-gonal form
    \[
    G=\qf{n, n+1, \ldots, 2n}_m
    \]
does not represent either of the consecutive integers
    \[
    T_j:=\frac{1}{2}(3n^2+n)+j, \qquad j=1, 2.
    \]
\end{Prop}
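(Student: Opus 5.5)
The plan follows the pattern of Propositions~\ref{Prop:non-representation-large-m} and~\ref{Prop:non-representation-even-boundary}: assume $G(\mathbf{x})=T_j$ for some $\mathbf{x}=(x_0,\ldots,x_n)\in\Z^{n+1}$ and $j\in\{1,2\}$. We then restrict which variables can lie outside $\{0,1\}$, using $P_m(-1)=m-3=\frac{3n-3}{2}$ and $P_m(x)\geq m=\frac{3n+3}{2}$ for $x\notin\{0,1,-1\}$. Since $n$ is odd, $m$ is an integer, and $m\geq 13$ because $n\geq 9$.

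\Step{1)} Suppose first that all $x_i\in\{0,1\}$. Then
\[
G(1,\ldots,1)=\sum_{k=n}^{2n}k=\tfrac{3n(n+1)}{2}=T_j+(n-j).
\]
Since $0<n-j<n$, the full sum exceeds $T_j$. Any proper subsum is at most $G(1,\ldots,1)-n<T_j$. Hence some $x_i\notin\{0,1\}$.

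\Step{2)} If some $x_i\notin\{0,1,-1\}$, then
\[
G(\mathbf{x})\geq nm=\tfrac{3n^2+3n}{2}>T_j,
\]
since $n>j$. So every variable outside $\{0,1\}$ equals $-1$.

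Next we locate the $-1$'s. Two of them would give at least $2n(m-3)=n(3n-3)>T_j$ for $n\geq 8$. A single $-1$ at index $i\geq 2$ would give at least
\[
(n+2)\tfrac{3n-3}{2}=\tfrac{3n^2+3n-6}{2},
\]
which exceeds $T_j=\frac{3n^2+n+2j}{2}$ because $2n-6>2j$ when $n\geq 8$. So exactly one of $x_0,x_1$ equals $-1$, and all other variables lie in $\{0,1\}$.

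\Step{3)} Suppose $x_1=-1$, with coefficient $n+1$. The remaining variables must represent
\[
T_j-(n+1)\tfrac{3n-3}{2}=\tfrac{n+3+2j}{2}.
\]
This is a positive integer because $n$ is odd, and it is less than $n$ for $n\geq 8$. But every remaining coefficient is at least $n$, so this is impossible.

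Suppose instead $x_0=-1$. The remaining variables, all in $\{0,1\}$, must represent
\[
T_j-n\tfrac{3n-3}{2}=2n+j\in\{2n+1,\,2n+2\}
\]
as a subsum of $n+1,\ldots,2n$. A single coefficient is at most $2n$, while any two coefficients sum to at least $(n+1)+(n+2)=2n+3$. This is a contradiction.

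The main care point is the inequality bookkeeping in Step 2: checking that the bound $n\geq 8$ (indeed $n\geq 9$, as $n$ is odd) suffices in each comparison, especially $(n+2)(m-3)>T_2$, which needs $n>8$ and is satisfied here.
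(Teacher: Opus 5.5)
Your proof is correct and follows the paper's argument essentially step for step: rule out the all-$\{0,1\}$ case, push any $-1$ to index $0$ or $1$ via $(n+2)(m-3)>T_j$, eliminate $x_1=-1$ because $\frac{n+3+2j}{2}$ lies strictly between $0$ and $n$, and eliminate $x_0=-1$ because $2n+j$ is not a subsum of $n+1,\ldots,2n$. One small correction to your closing remark: $(n+2)(m-3)-T_j=n-3-j$, so that inequality only needs $n>5$, and the binding constraint is actually $\frac{n+3+2j}{2}<n$, which needs $n>7$.
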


\begin{proof}
Fix $j \in \{ 1, 2 \}$ and suppose that $G(\mathbf{x})=T_j$ for some $\mathbf{x} = (x_0, x_1, \ldots, x_n) \in \Z^{n+1}$.

\smallskip

If $x_i \in\{ 0, 1 \}$ for all $i$, then
    \[
    G(1,\ldots,1) - n P_m(1) = \frac{3n^2+3n}{2} -n = \frac{3n^2+n}{2}  < T_j  < G(1, \ldots,1) =\frac{3n^2+3n}{2}.
    \]
Hence no subsum of the coefficients $\{ n, n+1, \ldots, 2n \}$ of $G$ equals $T_j$, and so some $x_i \notin \{ 0,1 \}$.

If $x_i \notin\{ 0,1 \}$ for some $i \geq 2$, then
    \[
    G(\mathbf{x}) - T_j  \geq (n+i) P_m(x_i)  -T_j \geq (n+2)(m-3) - T_j = n-3-j >0.
    \]
Thus $x_i \in \{0,1\}$ for every $i \geq 2$.

Hence at least one of $x_0, x_1$ lies outside $\{ 0, 1\}$.
If either lies outside $\{0,1,-1\}$, then
    \[
    G(\mathbf{x}) \geq nm =n \, \frac{3n+3}{2} > T_j,
    \]
a contradiction.
Therefore at least one of $x_0, x_1$ is equal to $-1$.

Suppose that $x_1=-1$. Then the remaining variables must represent
    \[
    T_j - (n+1) P_m(-1) = \frac{n+2j+3}{2}.
    \]
Since
    \[
    0 < \frac{n+2j+3}{2} < n,
    \]
this is impossible, as every coefficient of the remaining form $\qf{n, n+2, \ldots, 2n}_m$ is at least $n$.
Hence $x_1 \neq -1$, and therefore $x_0=-1$.
The remaining variables must then represent
    \[
    T_j-nP_m(-1)=2n+j.
    \]
Since $x_1, x_2, \ldots, x_n \in \{ 0,1\}$, the integer $2n+j$ would have to be a subsum of $\{ n+1, n+2, \ldots, 2n \}$.
But a single coefficient is at most $2n$, while the sum of any two coefficients is at least $(n+1)+(n+2)=2n+3$.
Thus neither $2n+1$ nor $2n+2$ can occur, a contradiction.

Therefore $G$ represents neither $T_1$ nor $T_2$.
\end{proof}

\smallskip

The three propositions above cover all integers $m \geq \frac{3n+2}{2}$.
Moreover, both $F$ and $G$ represent every integer from $n$ to $2n$ and no positive integer less than $n$.
Hence the preceding propositions provide, for every $m \geq \frac{3n+2}{2}$, an $m$-gonal form which represents all integers from $n$ to $2n$ but is not tight
$\calT(n)$-universal. Therefore the upper bound $m \leq \left\lfloor \frac{3n+1}{2} \right\rfloor$ for the universality range is optimal.

\smallskip

\section{Completion of the Proof of the Main Theorem}

\begin{proof}[Proof of Theorem~\ref{Thm:MainTheorem}]
For the pairs $(m,n)$ in the green and yellow regions, Theorems~\ref{Thm:04} and~\ref{Thm:05}, together with
\cite[Proposition 3.5]{jJ-mK-2023}, yield
    \[
    \mathrm{CS}(m,n)= \{ n, n+1, \ldots, 2n \}.
    \]
The case $m=5$ follows from the previously known result summarized in \cite[Table 1]{jJ-mK-2024}, while the case $m=6$ reduces to the case $m=3$.
All remaining pairs in the range of the theorem, except for $(7,9)$ and $(7,10)$, are covered by the previously known results indicated by the black dots in Figure~\ref{fig:CS(m,n)}.
The minimality of the criterion sets follows from the corresponding necessity results in the escalation process, together with the known results for the previously treated cases.
Finally, Section~\ref{Sec:Optimality} shows that the upper bound
    \[
    m \leq \left\lfloor \frac{3n+1}{2} \right\rfloor
    \]
is optimal in the sense stated in the theorem. This completes the proof.
\end{proof}

\smallskip

\bmhead{Acknowledgements}
The authors were supported by Basic Science Research Program through the National Research Foundation of Korea (NRF), funded by the Ministry of Education (RS-2023-00247457, RS-2020-NR053689).

\smallskip


\end{document}